\newcommand{\BIGOP}[1]{\mathop{\mathchoice{\raise-0.22em\hbox{\huge $#1$}} {\raise-0.05em\hbox{\Large $#1$}}{\hbox{\large $#1$}}{#1}}}
\journalname{}
\newcommand{\level}{\mathrm{level}}
\newcommand{\depth}{\mathrm{depth}}
\begin{document}

\title{Tree-based tensor formats
}


\author{Antonio Falc\'o   \and
        Wolfgang Hackbusch \and 
        Anthony Nouy
}


\institute{A. Falc\'o \at
              ESI International Chair@CEU-UCH, \\
Departamento de Matem\'aticas, F\'{\i}sica y Ciencias
Tecnol\'ogicas,\\
Universidad Cardenal Herrera-CEU, CEU Universities \\
San Bartolom\'e 55,
46115 Alfara del Patriarca (Valencia), Spain
              \email{afalco@uchceu.es}           
           \and
           W. Hackbusch \at
              Max-Planck-Institut \emph{Mathematik in den Naturwissenschaften}\\
Inselstr. 22, D-04103 Leipzig, Germany 
\email{wh@mis.mpg.de}
            \and
            A. Nouy \at
            Centrale Nantes, \\
LMJL UMR CNRS 6629\\
1 rue de la No\"e,
44321 Nantes Cedex 3, France.
\email{anthony.nouy@ec-nantes.fr}
}

\date{Received: date / Accepted: date}

\maketitle

\begin{abstract}
The main goal of this paper is to study the topological properties of tensors in tree-based Tucker format. These formats include the Tucker format and the Hierarchical Tucker format. 
A property of the so-called minimal subspaces is used for obtaining a representation of tensors with either bounded or fixed tree-based rank in the underlying algebraic tensor space.  We provide a new characterisation of minimal subspaces which extends the existing characterisations.   
We also introduce a definition of topological tensor
spaces in tree-based format, with the introduction of a norm at each vertex of the tree,  and prove the existence of best
approximations from sets of tensors with bounded tree-based rank,  under some assumptions on the norms weaker than in the existing results. 
\keywords{Tensor spaces \and Tree-based tensor format \and Tree-based rank \and Best approximation}
\subclass{15A69 \and 46B28 \and 46A32}
\end{abstract}

\section{Introduction}

Tensor approximation methods play a central role in the numerical solution of high dimensional problems arising in a wide range of applications. The reader is referred to the monograph \cite{Hackbusch} and surveys \cite{Khoromskij:2012fk,Grasedyck:2013,nouy:2017_morbook,bachmayr2016tensor} for an introduction to tensor numerical methods and an overview of recent developments in the field.
Low-rank tensor formats based on subspaces are widely used for complexity reduction in the representation of high-order tensors. 
Two of the most popular formats are the
Tucker format and the Hierarchical Tucker format \cite{HaKuehn2009} (HT for
short). It is possible to show that the Tensor Train format \cite{Osedelets1}, introduced originally by Vidal \cite{Vidal}, is a particular
case of the HT format (see e.g. Chapter 12 in \cite{Hackbusch}). In the framework of topological tensor
spaces, first results have been obtained on the existence of a best approximation in each fixed set of
tensors with bounded rank \cite{FALHACK}. In particular, this allows to
construct, on a theoretical level, iterative methods for
nonlinear convex {optimisation} problems over reflexive tensor Banach spaces \cite%
{FalcoNouy}. More generally, this is a crucial property for proving the stability of algorithms using tree-based tensor formats.

The Tucker and the HT formats are completely
characterised by a rooted tree together with a finite sequence of natural
numbers associated to each vertex of the tree, denominated the tree-based
rank. Each number in the tree-based rank is associated with a class of
subspaces of fixed dimension. It can be shown that for a given
tree, every element in the tensor space possesses a unique  tree-based rank.
In consequence, given a tree, a tensor space is a union of sets indexed by
the tree-based ranks. It allows to consider for a given tree two kinds of
sets in a tensor space: the set of tensors of fixed tree-based rank and the
set of tensors of bounded tree-based rank. 

This paper provides new results on the representation of tensors in general tree-based Tucker formats, in particular on a characterisation of minimal subspaces compatible with a given tree. It also provides a definition of topological tensor spaces associated with a given tree, and provides new results on the existence of best approximations from sets of tensors with bounded tree-based rank.

The paper is organised as follows. In Section~\ref{Tensor_TBF} , we introduce the tree-based tensors as a
generalisation, at algebraic level, of the hierarchical tensor format.  Moreover, we provide a new characterisation of
the minimal subspaces of tree-based tensors extending the previous results
obtained in \cite{FALHACK}, and introduce the definition of tree-based
rank.  Another main result of this section is Theorem~\ref{characterisation_FT}, which provides a characterisation for 
the representation for the set of tensors with
fixed tree-based rank.
 In Section~\ref{TTT_TBF} we introduce a definition of topological tensor spaces in 
tree-based format, with the introduction of a norm at each vertex of the tree. Finally in  Section~\ref{TTT_TBF},
we prove the existence of best approximations from sets 
of tensors with bounded tree-based rank under some assumptions on the norms that are weaker than the ones introduced in \cite{FALHACK}. 

\section{Algebraic tensors in the tree-based format}
\label{Tensor_TBF}

\subsection{Preliminary definitions and notations}

Let $D=\{1,2,\ldots,d\}$ be a finite index set, and let  $V_{j}$ $\left( 1\leq j\leq
d\right) $, be vector spaces.
Concerning the definition of the algebraic tensor space 
\begin{equation*}
\mathbf{V}_D:=\left. _{a}\bigotimes_{j=1}^{d}V_{j}\right.,
\end{equation*}
we refer to Greub \cite{Greub}. As underlying field we choose $%
\mathbb{R},$ but the results hold also for $\mathbb{C}$. The suffix `$a$' in
$_{a}\otimes_{j=1}^{d}V_{j}$ refers to the `algebraic' nature. By
definition, all elements of $\mathbf{V}$ 
are \emph{finite} linear combinations of elementary tensors $\mathbf{v}%
=\otimes_{j=1}^{d}v_{j}$ $\left( v_{j}\in V_{j}\right).$

For vector spaces $V_{j}$ and $W_{j}$ over $\mathbb{R},$ let linear mappings
$A_{j}:V_{j}\rightarrow W_{j}$ $\left( 1\leq j\leq d\right) $ be given. Then
the definition of the elementary tensor%
\begin{equation*}
\mathbf{A}=\bigotimes_{j=1}^{d}A_{j}:\;\mathbf{V}_D=\left. _{a}\bigotimes
_{j=1}^{d}V_{j}\right. \longrightarrow\mathbf{W}_D=\left. _{a}\bigotimes
_{j=1}^{d}W_{j}\right.
\end{equation*}
is given by%
\begin{equation}
\mathbf{A}\left( \bigotimes_{j=1}^{d}v_{j}\right)
:=\bigotimes_{j=1}^{d}\left( A_{j}v_{j}\right) .
\label{(A als Tensorprodukt der Aj}
\end{equation}
Note that \eqref{(A als Tensorprodukt der Aj} uniquely defines the linear
mapping $\mathbf{A}:\mathbf{V}_D\rightarrow\mathbf{W}_D.$ We recall that $L(V,W)$
is the space of linear maps from $V$ into $W,$ while $V^{\prime}=L(V,\mathbb{%
R})$ is the algebraic dual of $V$. For normed spaces, $\mathcal{L}(V,W)$
denotes the continuous linear maps, while $V^{\ast}=\mathcal{L}(V,\mathbb{R}%
) $ is the topological dual of $V$.

\subsection{Minimal subspaces in tensor representations}

 For a given  $\alpha \in 2^D \setminus \{\emptyset,D\}$, we let $\mathbf{V}_{\alpha}:= \left._a \bigotimes_{j \in \alpha} V_{j} \right.,$ with the convention $\mathbf{V}_{\{j\}} = V_j$ for all $j\in D$. The algebraic tensor space $\mathbf{V}_D$ is identified with $\mathbf{V}_\alpha \otimes_a \mathbf{V}_{\alpha^c}$, where $\alpha^c = D \setminus \alpha$. For a tensor $\mathbf{v}\in  \mathbf{V}_D =\mathbf{V}_\alpha \otimes_a \mathbf{V}_{\alpha^c}$ ,
the \emph{minimal subspace} $U_{\alpha}^{\min }(\mathbf{v}) \subset \mathbf{V}_\alpha$ of $\mathbf{v}$ is defined by the properties that $\mathbf{v} \in U_{\alpha}^{\min }(\mathbf{v}) \otimes_a \mathbf{V}_\alpha$ and $\mathbf{v} \in \mathbf{U}_{\alpha} \otimes_a \mathbf{V}_\alpha$ implies $U_{\alpha}^{\min }(\mathbf{v}) \subset \mathbf{U}_{\alpha}  $. Here we use the notation $U_{\{j\}}^{\min}(\mathbf{v}) = U_{j}^{\min}(\mathbf{v}) $, and we adopt the convention $U_{D}^{\min}(\mathbf{v}) = \mathrm{span}{\{\mathbf{v}\}}$.
We recall some useful results on minimal subspaces (see Section 2.2 in \cite{FALHACK}).
\begin{proposition}\label{minimal}
Let $\mathbf{v}\in \mathbf{V}_D$. For any $\alpha \in 2^D \setminus \{\emptyset,D\}$, there exists a unique minimal subspace $U_{\alpha}^{\min}(\mathbf{v}),$ where  $\dim
U_{\alpha}^{\min}(\mathbf{v}) < \infty$. Furthermore, it holds $\dim U_{\alpha}^{\min}(\mathbf{v}) = \dim U_{\alpha^c}^{\min}(\mathbf{v})$.
\end{proposition}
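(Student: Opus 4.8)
The plan is to produce an explicit candidate for $U_\alpha^{\min}(\mathbf{v})$ and to verify that it has the two defining properties; uniqueness and the dimension identity then follow with little extra work. Using the identification $\mathbf{V}_D = \mathbf{V}_\alpha \otimes_a \mathbf{V}_{\alpha^c}$, I would set
\[
U_\alpha(\mathbf{v}):=\bigl\{\,(\mathrm{id}_{\mathbf{V}_\alpha}\otimes\varphi)(\mathbf{v})\ :\ \varphi\in\mathbf{V}_{\alpha^c}'\,\bigr\}\subseteq\mathbf{V}_\alpha,
\]
where $\mathrm{id}_{\mathbf{V}_\alpha}\otimes\varphi:\mathbf{V}_\alpha\otimes_a\mathbf{V}_{\alpha^c}\to\mathbf{V}_\alpha\otimes_a\mathbb{R}\cong\mathbf{V}_\alpha$ is the contraction determined on elementary tensors by $a\otimes b\mapsto\varphi(b)\,a$ and extended linearly, as in the definition of tensor products of linear maps recalled above. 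This is plainly a linear subspace of $\mathbf{V}_\alpha$, and the whole argument reduces to relating it to a shortest representation of $\mathbf{v}$.

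The key step is to fix a representation $\mathbf{v}=\sum_{i=1}^{r} a_i\otimes b_i$ with $a_i\in\mathbf{V}_\alpha$, $b_i\in\mathbf{V}_{\alpha^c}$ and $r$ minimal; such a finite representation exists because $\mathbf{v}$ is a finite linear combination of elementary tensors, each of which splits along the partition $\{\alpha,\alpha^c\}$. Minimality forces both $\{a_1,\dots,a_r\}$ and $\{b_1,\dots,b_r\}$ to be linearly independent: a nontrivial relation among the $b_i$ would let one eliminate a term and obtain a shorter representation, and symmetrically for the $a_i$. From $(\mathrm{id}\otimes\varphi)(\mathbf{v})=\sum_{i=1}^{r}\varphi(b_i)\,a_i$ one gets $U_\alpha(\mathbf{v})\subseteq\mathrm{span}\{a_1,\dots,a_r\}$; conversely, since $\{b_1,\dots,b_r\}$ is linearly independent, extending it to a basis of $\mathbf{V}_{\alpha^c}$ provides coordinate functionals $\varphi_1,\dots,\varphi_r\in\mathbf{V}_{\alpha^c}'$ with $\varphi_j(b_i)=\delta_{ij}$, whence $a_j=(\mathrm{id}\otimes\varphi_j)(\mathbf{v})\in U_\alpha(\mathbf{v})$. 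Therefore $U_\alpha(\mathbf{v})=\mathrm{span}\{a_1,\dots,a_r\}$, which is finite-dimensional of dimension exactly $r$, and $\mathbf{v}=\sum_i a_i\otimes b_i\in U_\alpha(\mathbf{v})\otimes_a\mathbf{V}_{\alpha^c}$.

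It remains to check the minimality property of $U_\alpha(\mathbf{v})$: if $\mathbf{v}\in\mathbf{U}_\alpha\otimes_a\mathbf{V}_{\alpha^c}$, write $\mathbf{v}=\sum_k u_k\otimes w_k$ with $u_k\in\mathbf{U}_\alpha$; then every contraction $(\mathrm{id}\otimes\varphi)(\mathbf{v})=\sum_k\varphi(w_k)\,u_k$ lies in $\mathbf{U}_\alpha$, so $U_\alpha(\mathbf{v})\subseteq\mathbf{U}_\alpha$. Thus $U_\alpha(\mathbf{v})$ satisfies both defining conditions, so $U_\alpha^{\min}(\mathbf{v})$ exists and equals $U_\alpha(\mathbf{v})$; uniqueness is automatic, since any two subspaces having these properties are contained in each other. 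Finally, the construction is symmetric in $\alpha$ and $\alpha^c$ through the identification $\mathbf{V}_D\cong\mathbf{V}_\alpha\otimes_a\mathbf{V}_{\alpha^c}\cong\mathbf{V}_{\alpha^c}\otimes_a\mathbf{V}_\alpha$, and the same minimal representation gives $U_{\alpha^c}^{\min}(\mathbf{v})=\mathrm{span}\{b_1,\dots,b_r\}$, of dimension $r$ as well; hence $\dim U_\alpha^{\min}(\mathbf{v})=\dim U_{\alpha^c}^{\min}(\mathbf{v})$.

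There is no real obstacle here — it is linear algebra over an arbitrary field, with no topology involved — but some care is needed with the canonical identifications: one must verify that $\mathbf{U}_\alpha\otimes_a\mathbf{V}_{\alpha^c}$ sits inside $\mathbf{V}_\alpha\otimes_a\mathbf{V}_{\alpha^c}=\mathbf{V}_D$ as a genuine subspace (which holds because tensoring with the free factor $\mathbf{V}_{\alpha^c}$ is exact), and that the contractions $\mathrm{id}_{\mathbf{V}_\alpha}\otimes\varphi$ are well defined on the algebraic tensor product. The one point that genuinely must not be skipped is the linear independence of both families in a minimal-length representation, since the identification of $U_\alpha(\mathbf{v})$ with $\mathrm{span}\{a_1,\dots,a_r\}$ — and with it the dimension count — rests entirely on it.
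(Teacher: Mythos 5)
Your proof is correct and complete. The paper itself gives no proof of this proposition --- it is recalled from Section~2.2 of \cite{FALHACK} --- and your argument (defining the candidate subspace as the image of the contractions $\mathrm{id}_{\alpha}\otimes\varphi$, $\varphi\in\mathbf{V}_{\alpha^c}'$, and identifying it with $\mathrm{span}\{a_1,\dots,a_r\}$ for a minimal-length representation, with the linear independence of both families doing the real work) is exactly the standard argument underlying that reference, which the paper later invokes as \cite[Theorem 2.17]{FALHACK}.
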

The relation between minimal subspaces is as follows (see Corollary~2.9 of \cite%
{FALHACK}).
\begin{proposition}\label{inclusin_Umin} 
Let $\mathbf{v}\in \mathbf{V}_D$. For any $\alpha \in 2^D$ with $\#\alpha \ge 2$ and a non-trivial partition $\mathcal{P}_{\alpha}$ of $\alpha$, it holds 
 \begin{equation*}
U_{\alpha}^{\min }(\mathbf{v})\subset \left.
_{a}\bigotimes_{\beta \in \mathcal{P}_{\alpha}}U_{\beta}^{\min }(\mathbf{v})\right. .
\end{equation*}
\end{proposition}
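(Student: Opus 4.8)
\emph{Proof strategy.} The plan is to obtain the inclusion from the \emph{minimality} that defines $U_{\alpha}^{\min}(\mathbf{v})$, together with one elementary fact about algebraic tensor products of vector spaces over a field. Write $\mathcal{P}_{\alpha}=\{\beta_{1},\dots,\beta_{k}\}$ with $k\ge 2$; since $\beta_{1},\dots,\beta_{k},\alpha^{c}$ partition $D$, we may identify $\mathbf{V}_{D}$ with $\left. _{a}\bigotimes_{i=1}^{k}\mathbf{V}_{\beta_{i}}\right.\otimes_{a}\mathbf{V}_{\alpha^{c}}$ (with the convention that $\mathbf{V}_{\alpha^{c}}=\mathbb{R}$ is the empty factor when $\alpha=D$), and we set $\mathbf{U}_{i}:=U_{\beta_{i}}^{\min}(\mathbf{v})\subseteq\mathbf{V}_{\beta_{i}}$. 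Since $U_{\alpha}^{\min}(\mathbf{v})$ is by definition the smallest subspace $\mathbf{U}_{\alpha}\subseteq\mathbf{V}_{\alpha}$ with $\mathbf{v}\in\mathbf{U}_{\alpha}\otimes_{a}\mathbf{V}_{\alpha^{c}}$ --- and for $\alpha=D$ the convention $U_{D}^{\min}(\mathbf{v})=\mathrm{span}\{\mathbf{v}\}$ is likewise the smallest $\mathbf{U}$ with $\mathbf{v}\in\mathbf{U}\otimes_{a}\mathbb{R}=\mathbf{U}$ --- it suffices to prove that
\begin{equation}\label{eq:Umin_goal}
\mathbf{v}\in\left(\left. _{a}\bigotimes_{i=1}^{k}\mathbf{U}_{i}\right.\right)\otimes_{a}\mathbf{V}_{\alpha^{c}} .
\end{equation}

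The first step is to record, for each fixed $i$, the inclusion that comes straight from the definition of $\mathbf{U}_{i}=U_{\beta_{i}}^{\min}(\mathbf{v})$ relative to the splitting $\mathbf{V}_{D}=\mathbf{V}_{\beta_{i}}\otimes_{a}\mathbf{V}_{\beta_{i}^{c}}$ (with $\beta_{i}^{c}:=D\setminus\beta_{i}$), namely $\mathbf{v}\in\mathbf{U}_{i}\otimes_{a}\mathbf{V}_{\beta_{i}^{c}}$. Since $\beta_{i}^{c}=\big(\bigcup_{j\neq i}\beta_{j}\big)\cup\alpha^{c}$ is a disjoint union, associativity of the algebraic tensor product rewrites this as
\begin{equation}\label{eq:Umin_slice}
\mathbf{v}\in\mathbf{U}_{i}\otimes_{a}\left(\left. _{a}\bigotimes_{j\neq i}\mathbf{V}_{\beta_{j}}\right.\right)\otimes_{a}\mathbf{V}_{\alpha^{c}}\qquad(1\le i\le k),
\end{equation}
so that $\mathbf{v}$ lies in the intersection over $i=1,\dots,k$ of the $k$ subspaces appearing on the right-hand side of \eqref{eq:Umin_slice}.

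The crux is then the identity, valid inside $\mathbf{V}_{D}$,
\begin{equation*}
\bigcap_{i=1}^{k}\left[\,\mathbf{U}_{i}\otimes_{a}\left. _{a}\bigotimes_{j\neq i}\mathbf{V}_{\beta_{j}}\right.\otimes_{a}\mathbf{V}_{\alpha^{c}}\right]\;=\;\left. _{a}\bigotimes_{i=1}^{k}\mathbf{U}_{i}\right.\otimes_{a}\mathbf{V}_{\alpha^{c}},
\end{equation*}
which, together with \eqref{eq:Umin_slice}, gives \eqref{eq:Umin_goal} at once. This is a standard property of algebraic tensor products of vector spaces: choosing for each $i$ a complement $\mathbf{V}_{\beta_{i}}=\mathbf{U}_{i}\oplus\mathbf{W}_{i}$ and distributing $\otimes_{a}$ over these direct sums writes $\mathbf{V}_{D}$ as the direct sum, over all subsets $S\subseteq\{1,\dots,k\}$, of the subspaces $\big(\bigotimes_{i\in S}\mathbf{U}_{i}\big)\otimes_{a}\big(\bigotimes_{i\notin S}\mathbf{W}_{i}\big)\otimes_{a}\mathbf{V}_{\alpha^{c}}$; the $i$-th subspace in the intersection is exactly the sum of those summands with $i\in S$, so intersecting over all $i$ leaves precisely the summand $S=\{1,\dots,k\}$, which is the right-hand side. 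Equivalently, one may prove the bipartite case $k=2$ first --- the classical $(\mathbf{U}_{1}\otimes_{a}\mathbf{V}_{\beta_{2}})\cap(\mathbf{V}_{\beta_{1}}\otimes_{a}\mathbf{U}_{2})=\mathbf{U}_{1}\otimes_{a}\mathbf{U}_{2}$ --- and then induct on $\#\alpha$, applying it to the partition $\{\beta_{1},\alpha\setminus\beta_{1}\}$ of $\alpha$ and then invoking the inductive hypothesis for $\alpha\setminus\beta_{1}$.

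I expect the only genuinely delicate point to be this last lemma, precisely the direct-sum bookkeeping when the spaces $\mathbf{V}_{\beta_{i}}$ are infinite-dimensional: the complements $\mathbf{W}_{i}$ exist by the usual basis-extension argument, one uses that $\otimes_{a}$ distributes over arbitrary direct sums, and one uses that the intersection of subspaces each spanned by some of the summands of a fixed direct-sum decomposition is again spanned by the common summands. Everything else is immediate, provided one keeps in mind that all the minimal subspaces $U_{\beta_{i}}^{\min}(\mathbf{v})$ are taken with respect to the one fixed ambient space $\mathbf{V}_{D}$, so that no re-identification of ambient tensor spaces is needed; Proposition~\ref{minimal} moreover guarantees that each $\mathbf{U}_{i}$ is finite-dimensional, a convenience the argument does not actually require.
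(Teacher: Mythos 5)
Your proof is correct. Note that the paper itself offers no proof of Proposition~\ref{inclusin_Umin}: it is quoted from Corollary~2.9 of \cite{FALHACK}, so there is no in-paper argument to compare against line by line. Your route --- reduce, via the minimality clause in the definition of $U_{\alpha}^{\min}(\mathbf{v})$, to showing $\mathbf{v}\in\bigl(\left._a\bigotimes_{\beta\in\mathcal{P}_\alpha}U_{\beta}^{\min}(\mathbf{v})\right.\bigr)\otimes_a\mathbf{V}_{\alpha^c}$, obtain this by intersecting the $k$ memberships $\mathbf{v}\in U_{\beta_i}^{\min}(\mathbf{v})\otimes_a\mathbf{V}_{\beta_i^c}$, and justify the intersection identity by choosing complements $\mathbf{V}_{\beta_i}=\mathbf{U}_i\oplus\mathbf{W}_i$ and tracking the resulting direct-sum decomposition of $\mathbf{V}_D$ indexed by subsets $S\subseteq\{1,\dots,k\}$ --- is precisely the standard ``intersection property'' argument that underlies the cited corollary, and every ingredient (existence of algebraic complements, distributivity of $\otimes_a$ over direct sums, and the fact that an intersection of sums of summands of a fixed direct-sum decomposition is the sum of the common summands) is valid for algebraic tensor products of arbitrary, possibly infinite-dimensional, vector spaces. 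Your treatment of the boundary case $\alpha=D$ through the convention $U_{D}^{\min}(\mathbf{v})=\mathrm{span}\{\mathbf{v}\}$ and the empty factor $\mathbf{V}_{\alpha^c}=\mathbb{R}$ is likewise consistent with the paper's conventions, so no gap remains.
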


Let $\mathcal{P}_D$ be a given non-trivial partition of $D.$ The algebraic tensor space $ \mathbf{V}_D = \left._a \bigotimes_{j=1}^d V_{j} \right.$ is identified with $ \left._a \bigotimes_{\alpha \in \mathcal{P}_D} \mathbf{V}_{\alpha} \right..$ By definition of the minimal subspaces $ U_{\alpha}^{\min}(\mathbf{v}),$ $\alpha \in \mathcal{P}_D$, we have 
$$
\mathbf{v} \in \left._a \bigotimes_{\alpha \in \mathcal{P}_D} U_{\alpha}^{\min}(\mathbf{v}%
)\right..$$
For a given $\alpha\in \mathcal{P}_D$ with $\#\alpha \ge 2$ and a non-trivial partition $\mathcal{P}_{\alpha}$ of $\alpha$, we also have 
\begin{equation*}
 \mathbf{v} \in \left(\left._a \bigotimes_{\beta \in \mathcal{P}_{\alpha}}
U_{\beta}^{\min}(\mathbf{v}) \right. \right) \otimes_ a \left(\left._a
\bigotimes_{\delta \in \mathcal{P}_D\setminus \{\alpha\}} U_{\delta}^{\min}(\mathbf{v}) \right. \right).
\end{equation*}
The following result gives a characterisation  of minimal subspaces. 
\begin{proposition}
\label{(Ualpha in Tensor Uj coro} Let $\mathbf{v} \in \mathbf{V}_D$ and let $\alpha$ be a subset of $D$ with $\#\alpha\ge 2$ and 
$\mathcal{P}_\alpha$ be  a non-trivial partition of $\alpha$. Assume that $\mathbf{V}_{\alpha}$ and $\mathbf{V}_{\beta}$, for $\beta \in \mathcal{P}_\alpha
$, are normed spaces. Then for each $\beta \in \mathcal{P}_\alpha
$, it holds
\begin{align*}
U_{\beta}^{\min}(\mathbf{v}) & =  \mathrm{span}\, \left\{ \left( id_{\beta} \otimes\boldsymbol{\varphi}%
^{(\alpha \setminus \beta)} \right)(\mathbf{v}_{\alpha}): \mathbf{v}%
_{\alpha} \in U_{\alpha }^{\min }(\mathbf{v}) \text{, } \boldsymbol{%
\varphi}^{(\alpha \setminus \beta)} \in \left._{a}\bigotimes_{\gamma \in \mathcal{P}_\alpha \setminus \{\beta\}} 
\mathbf{V}_{\beta}^* \right. \right\}
\end{align*}
\end{proposition}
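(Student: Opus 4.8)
The plan is to establish the two inclusions separately, relying -- besides the defining properties of minimal subspaces and their finite-dimensionality from Proposition~\ref{minimal} -- on the known characterisation of the minimal subspace of a \emph{single} tensor, which the present statement generalises (see \cite{FALHACK}): if $\mathbf{w}$ belongs to an algebraic tensor product ${}_{a}\bigotimes_{k\in I}Z_{k}$ and $\emptyset\neq\gamma\subsetneq I$, then $U_{\gamma}^{\min}(\mathbf{w})=\mathrm{span}\{(id_{\gamma}\otimes\boldsymbol{\varphi})(\mathbf{w}):\boldsymbol{\varphi}\in{}_{a}\bigotimes_{k\in\gamma^{c}}Z_{k}'\}$, and when the factors $Z_{k}$ are normed the algebraic duals $Z_{k}'$ may be replaced by the continuous duals $Z_{k}^{*}$, which is legitimate because $U_{\gamma}^{\min}(\mathbf{w})$ is finite-dimensional. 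Throughout I use the identifications $\mathbf{V}_{\alpha}=\mathbf{V}_{\beta}\otimes_{a}\mathbf{V}_{\alpha\setminus\beta}$ with $\mathbf{V}_{\alpha\setminus\beta}={}_{a}\bigotimes_{\gamma\in\mathcal{P}_{\alpha}\setminus\{\beta\}}\mathbf{V}_{\gamma}$ and $\mathbf{V}_{\beta^{c}}=\mathbf{V}_{\alpha\setminus\beta}\otimes_{a}\mathbf{V}_{\alpha^{c}}$, and I write $W_{\beta}$ for the right-hand side of the asserted identity.

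\emph{Step 1: $W_{\beta}\subseteq U_{\beta}^{\min}(\mathbf{v})$.} Let $\mathbf{v}_{\alpha}\in U_{\alpha}^{\min}(\mathbf{v})$ and $\boldsymbol{\varphi}^{(\alpha\setminus\beta)}\in{}_{a}\bigotimes_{\gamma\in\mathcal{P}_{\alpha}\setminus\{\beta\}}\mathbf{V}_{\gamma}^{*}$. Applying the single-tensor characterisation to $\mathbf{v}$ with respect to the bipartition $\{\alpha,\alpha^{c}\}$ gives some $\boldsymbol{\chi}\in\mathbf{V}_{\alpha^{c}}'$ with $\mathbf{v}_{\alpha}=(id_{\alpha}\otimes\boldsymbol{\chi})(\mathbf{v})$. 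Since partial contractions over the disjoint index sets $\alpha\setminus\beta$ and $\alpha^{c}$ compose, $(id_{\beta}\otimes\boldsymbol{\varphi}^{(\alpha\setminus\beta)})(\mathbf{v}_{\alpha})=(id_{\beta}\otimes(\boldsymbol{\varphi}^{(\alpha\setminus\beta)}\otimes\boldsymbol{\chi}))(\mathbf{v})$, and $\boldsymbol{\varphi}^{(\alpha\setminus\beta)}\otimes\boldsymbol{\chi}\in\mathbf{V}_{\beta^{c}}'$. Such a contraction of $\mathbf{v}$ lies in $U_{\beta}^{\min}(\mathbf{v})$: writing $\mathbf{v}=\sum_{k}b_{k}\otimes e_{k}$ with $b_{k}\in U_{\beta}^{\min}(\mathbf{v})$ and $e_{k}\in\mathbf{V}_{\beta^{c}}$ (possible by the defining property of $U_{\beta}^{\min}(\mathbf{v})$), one has $(id_{\beta}\otimes\boldsymbol{\rho})(\mathbf{v})=\sum_{k}\boldsymbol{\rho}(e_{k})\,b_{k}\in U_{\beta}^{\min}(\mathbf{v})$ for every $\boldsymbol{\rho}\in\mathbf{V}_{\beta^{c}}'$. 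Since $U_{\beta}^{\min}(\mathbf{v})$ is a subspace, taking the span yields $W_{\beta}\subseteq U_{\beta}^{\min}(\mathbf{v})$; in particular $W_{\beta}$ is finite-dimensional.

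\emph{Step 2: $U_{\beta}^{\min}(\mathbf{v})\subseteq W_{\beta}$.} I would argue by minimality. Fix a basis $\mathbf{u}_{1},\dots,\mathbf{u}_{r}$ of $U_{\alpha}^{\min}(\mathbf{v})$, finite by Proposition~\ref{minimal}. Each $\mathbf{u}_{i}$ is a single tensor in $\mathbf{V}_{\alpha}={}_{a}\bigotimes_{\gamma\in\mathcal{P}_{\alpha}}\mathbf{V}_{\gamma}$, whose factor spaces $\mathbf{V}_{\gamma}$ are normed by hypothesis; applying the single-tensor characterisation to $\mathbf{u}_{i}$ at the granularity of the partition $\mathcal{P}_{\alpha}$ therefore gives
\[
U_{\beta}^{\min}(\mathbf{u}_{i})=\mathrm{span}\Bigl\{(id_{\beta}\otimes\boldsymbol{\varphi}^{(\alpha\setminus\beta)})(\mathbf{u}_{i}):\boldsymbol{\varphi}^{(\alpha\setminus\beta)}\in{}_{a}\bigotimes_{\gamma\in\mathcal{P}_{\alpha}\setminus\{\beta\}}\mathbf{V}_{\gamma}^{*}\Bigr\}\ \subseteq\ W_{\beta},
\]
the inclusion holding because $\mathbf{u}_{i}\in U_{\alpha}^{\min}(\mathbf{v})$, so these elements are among the generators of $W_{\beta}$. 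By the defining property of $U_{\beta}^{\min}(\mathbf{u}_{i})$ we get $\mathbf{u}_{i}\in U_{\beta}^{\min}(\mathbf{u}_{i})\otimes_{a}\mathbf{V}_{\alpha\setminus\beta}\subseteq W_{\beta}\otimes_{a}\mathbf{V}_{\alpha\setminus\beta}$ for each $i$, hence $U_{\alpha}^{\min}(\mathbf{v})=\mathrm{span}\{\mathbf{u}_{1},\dots,\mathbf{u}_{r}\}\subseteq W_{\beta}\otimes_{a}\mathbf{V}_{\alpha\setminus\beta}$. Consequently $\mathbf{v}\in U_{\alpha}^{\min}(\mathbf{v})\otimes_{a}\mathbf{V}_{\alpha^{c}}\subseteq(W_{\beta}\otimes_{a}\mathbf{V}_{\alpha\setminus\beta})\otimes_{a}\mathbf{V}_{\alpha^{c}}=W_{\beta}\otimes_{a}\mathbf{V}_{\beta^{c}}$, and the minimality of $U_{\beta}^{\min}(\mathbf{v})$ forces $U_{\beta}^{\min}(\mathbf{v})\subseteq W_{\beta}$. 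Together with Step~1 this proves the identity.

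The individual arguments are short, and the step I expect to be the main obstacle is not a computation but getting the reductions right: recognising that the reverse inclusion should be obtained by applying the single-tensor characterisation to a basis of $U_{\alpha}^{\min}(\mathbf{v})$ and then invoking minimality of $U_{\beta}^{\min}(\mathbf{v})$, and keeping the nested identifications $\mathbf{V}_{\alpha}=\mathbf{V}_{\beta}\otimes_{a}\mathbf{V}_{\alpha\setminus\beta}$, $\mathbf{V}_{\beta^{c}}=\mathbf{V}_{\alpha\setminus\beta}\otimes_{a}\mathbf{V}_{\alpha^{c}}$ together with the composition rule for partial contractions over disjoint index sets straight. A related subtlety is that the single-tensor characterisation is usually phrased with functionals on the atomic factors $V_{j}$, whereas here it must be used at the intermediate granularity of the partition $\mathcal{P}_{\alpha}$, with functionals in $\mathbf{V}_{\gamma}^{*}$; this is exactly the point where the hypothesis that $\mathbf{V}_{\alpha}$ and the $\mathbf{V}_{\gamma}$, $\gamma\in\mathcal{P}_{\alpha}$, are normed is used.
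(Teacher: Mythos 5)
Your proof is correct, and its two halves have a different status relative to the paper. The first inclusion is essentially the paper's own argument: write $\mathbf{v}_{\alpha}=(id_{\alpha}\otimes\boldsymbol{\chi})(\mathbf{v})$ using the characterisation of $U_{\alpha}^{\min}(\mathbf{v})$ from \cite[Theorem~2.17]{FALHACK} and compose the two partial contractions into a single contraction of $\mathbf{v}$ against a functional on $\mathbf{V}_{\beta^{c}}$, which lands in $U_{\beta}^{\min}(\mathbf{v})$. For the reverse inclusion you take a genuinely different route. The paper invokes the same characterisation a second time, now for $U_{\beta}^{\min}(\mathbf{v})$ with respect to the partition $(\mathcal{P}_{\alpha}\setminus\{\beta\})\cup\{\alpha^{c}\}$ and with functionals restricted to the duals of the finite-dimensional minimal subspaces $U_{\gamma}^{\min}(\mathbf{v})$ and $U_{\alpha^{c}}^{\min}(\mathbf{v})$; it then expands such a functional as a finite sum $\sum_{l}\boldsymbol{\psi}_{l}^{(\alpha\setminus\beta)}\otimes\boldsymbol{\phi}_{l}^{(\alpha^{c})}$ and regroups the contraction in two stages so that each summand is visibly one of the generators of $W_{\beta}$. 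You avoid decomposing functionals altogether: you apply the characterisation to each basis vector $\mathbf{u}_{i}$ of $U_{\alpha}^{\min}(\mathbf{v})$ viewed inside $\mathbf{V}_{\alpha}$, deduce $\mathbf{u}_{i}\in W_{\beta}\otimes_{a}\mathbf{V}_{\alpha\setminus\beta}$, hence $\mathbf{v}\in W_{\beta}\otimes_{a}\mathbf{V}_{\beta^{c}}$, and conclude from the defining minimality of $U_{\beta}^{\min}(\mathbf{v})$. Both rest on the same external input from \cite{FALHACK}; the paper's version produces an explicit two-stage contraction formula for every element of $U_{\beta}^{\min}(\mathbf{v})$, which is exactly the form reused later when minimal subspaces are extended to the topological setting in Section~4, whereas yours is a little cleaner logically but requires introducing minimal subspaces of the auxiliary tensors $\mathbf{u}_{i}$ relative to $\mathbf{V}_{\alpha}$ rather than $\mathbf{V}_{D}$, a mild extension of the paper's notation that is nonetheless perfectly legitimate.
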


\begin{proof}
First observe that 
$
\mathbf{V}_{D}=\mathbf{V}_{\alpha }\otimes _{a}\mathbf{V}_{\alpha^c}=\left(
\left. _{a}\bigotimes_{\beta \in \mathcal{P}_\alpha} \mathbf{V}_{\beta}\right. \right)
\otimes _{a} \mathbf{V}_{\alpha^c} .
$
From  \cite[Theorem 2.17]{FALHACK}, we have   
$
U_{\alpha }^{\min }(\mathbf{v}) = \left\{ (id_{\alpha }\otimes \boldsymbol{%
\varphi }^{(\alpha^c )})(\mathbf{v}):\boldsymbol{\varphi }^{(\alpha^c )}\in \mathbf{V}_{\alpha^c}^\ast
 \right\}.
$ 
Since $\mathbf{v} \in \mathbf{V}_\alpha \otimes U_{\alpha^c }^{\min }(\mathbf{v})$, we can replace $\mathbf{V}_{\alpha^c}^\ast$ by 
the larger space $U_{\alpha^c }^{\min }(\mathbf{v})^\ast$, and obtain
\begin{align*}
U_{\alpha }^{\min }(\mathbf{v})& =\left\{ (id_{\alpha }\otimes \boldsymbol{%
\varphi }^{(\alpha^c )})(\mathbf{v}):\boldsymbol{\varphi }^{(\alpha^c )}\in
U_{\alpha^c }^{\min }(\mathbf{v})^\ast\right\}.
\end{align*}
In a similar way and again from  \cite[Theorem 2.17]{FALHACK}, we also prove that for any $\beta \in \mathcal{P}_\alpha$, it holds
\begin{align*}
U_{\beta}^{\min }(\mathbf{v})& =\left\{ (id_{\beta}\otimes 
\boldsymbol{\varphi }^{(\beta^c)})(\mathbf{v}):\boldsymbol{%
\varphi }^{(\beta^c)}\in \left( \left. _{a}\bigotimes_{\gamma \in \mathcal{P}_\alpha \setminus \{\beta\}
} U_{\gamma}^{\min }(\mathbf{v})^{\ast }\right. \right) \otimes_a  U_{\alpha^c}^{\min }(\mathbf{v}%
)^{\ast } \right\}.
\end{align*}%
Take $\mathbf{v}_{\alpha }\in {U}_{\alpha }^{\min }(%
\mathbf{v}).$ Then there exists $\boldsymbol{\varphi }^{(\alpha^c )}\in U_{\alpha^c }^{\min }(\mathbf{v})^\ast $
such that $\mathbf{v}_{\alpha }=\left( id_{\alpha }\otimes \boldsymbol{%
\varphi }^{(\gamma )}\right) (\mathbf{v}).$ Now, for $\boldsymbol{\varphi }%
^{(\alpha \setminus \beta)}\in \left. _{a}\bigotimes_{\gamma \in \mathcal{P}_\alpha \setminus \{\beta\}}U_{\gamma}^{\min }(\mathbf{v})^{\ast }\right. ,$ we have 
\begin{equation*}
\left( id_{\beta}\otimes \boldsymbol{\varphi }^{(\alpha \setminus
\beta)}\right) (\mathbf{v}_{\alpha })=\left( id_{\beta}\otimes 
\boldsymbol{\varphi }^{(\alpha \setminus \beta)}\otimes \boldsymbol{%
\varphi }^{(\alpha^c )}\right) (\mathbf{v}),
\end{equation*}%
and hence $\left( id_{\beta}\otimes \boldsymbol{\varphi }^{(\alpha
\setminus \beta)}\right) (\mathbf{v}_{\alpha })\in U_{\beta}^{\min }(\mathbf{v}).$ This proves a first inclusion. Now for $\beta\in \mathcal{P}_\alpha$, take $\mathbf{v}_{\beta }\in U_{\beta}^{\min }(\mathbf{v}),$ then there exists 
\begin{equation*}
\boldsymbol{%
\varphi }^{(\beta^c)}\in \left( \left. _{a}\bigotimes_{\gamma \in \mathcal{P}_\alpha \setminus \{\beta\}
} U_{\gamma}^{\min }(\mathbf{v})^{\ast }\right. \right) \otimes_a  U_{\alpha^c}^{\min }(\mathbf{v}%
)^{\ast }
\end{equation*}%
such that $\mathbf{v}_{\beta}=\left( id_{\beta}\otimes 
\boldsymbol{\varphi }^{(\beta^c)}\right) (\mathbf{v}).$ Then $%
\boldsymbol{\varphi }^{(\beta^c)}=\sum_{l=1}^{r}\boldsymbol{%
\psi }_{l}^{(\alpha \setminus \beta)}\otimes \boldsymbol{\phi }%
_{l}^{(\alpha^c )},$ where $\boldsymbol{\phi }_{l}^{(\alpha^c )}\in  U_{\alpha^c}^{\min }(\mathbf{v})^{\ast } $
and $\boldsymbol{\psi }_{l}^{(\alpha \setminus \beta)}\in \left. _{a}\bigotimes_{\gamma \in \mathcal{P}_\alpha \setminus \{\beta\}
} U_{\gamma}^{\min }(\mathbf{v})^{\ast }\right.,$
for $1\leq l\leq r.$ Thus, 
\begin{align*}
\mathbf{v}_{\beta}& =\left( id_{\beta}\otimes \boldsymbol{%
\varphi }^{(\beta^c)}\right) (\mathbf{v})   =\sum_{i=1}^{r}\left( id_{\beta}\otimes \boldsymbol{\psi }%
_{i}^{(\alpha \setminus \beta)}\otimes \boldsymbol{\phi }_{i}^{(\alpha^c
)}\right) (\mathbf{v}) \\
& =\sum_{i=1}^{r}\left( id_{\beta}\otimes \boldsymbol{\psi }%
_{i}^{(\alpha \setminus \beta)}\right) \left( (id_{\alpha }\otimes 
\boldsymbol{\phi }_{i}^{(\alpha^c )})(\mathbf{v})\right) .
\end{align*}%
Observing that $(id_{\alpha }\otimes \boldsymbol{\phi }_{l}^{(\alpha^c )})(%
\mathbf{v})\in U_{\alpha }^{\min }(\mathbf{v}),$ we obtain the other inclusion.
\qed
\end{proof}

\subsection{Algebraic tensor spaces in the tree-based format}

\begin{definition}
\label{partition_tree} A tree $T_{D}$ is called \emph{a dimension
partition tree of $D$} if
\begin{enumerate}
\item[(a)] all vertices $\alpha \in T_D$ are non-empty subsets of $D,$
\item[(b)] $D$ is the \emph{root} of $T_D,$
\item[(c)] every vertex $\alpha \in T_{D}$ with $\#\alpha \geq 2$ has at
least two sons and the set of sons of $\alpha$, denoted $S(\alpha)$, is a non-trivial partition of $\alpha$,  
\item[(d)] every vertex $\alpha\in T_D$ with $\#\alpha = 1$ has no son and is called a  \emph{leaf}.
\end{enumerate}
\end{definition}

The set of leaves is denoted by $\mathcal{L}(T_{D}).$ A straightforward consequence of Definition~%
\ref{partition_tree} is that the set of leaves $\mathcal{L}(T_{D})$
coincides with the singletons of $D,$ i.e., $\mathcal{L}(T_{D})=\{\{j\}:j\in
D\} $ and hence it is the trivial partition of $D.$
We remark that for a tree $T_D$ such that $S(D)\neq \mathcal{L}(T_D)$, $S(D)$ is a non-trivial partition of $D.$

We denote by $\mathrm{level}(\alpha)$, $\alpha \in T_D$, the levels of the vertices in $T_D$, which are such that $\mathrm{level}(D) = 0$ and for any pair $\alpha,\beta \in T_D$ such that $\beta \in S(\alpha),$ $\mathrm{level}(\beta) = \mathrm{level}(\alpha)+1$. The \emph{depth}\footnote{By using the notion of edge, that is, the connection between one vertex to another, then our definition of depth coincides with the classical definition of height, i.e. the longest downward path between the root and a leaf.} of the tree $T_D$ is defined 
as $\mathrm{depth}(T_D) = \max_{\alpha \in T_D} \level(\alpha)  .$

\begin{definition}
For a tensor space $\mathbf{V}_D$ and a dimension 
partition tree $T_{D}$, the pair $(\mathbf{V}_D,T_D)$ is called a representation of the tensor space
$\mathbf{V}_{D}$ in \emph{tree-based format}, and is associated with the collection of spaces $\{\mathbf{V}_\alpha\}_{\alpha \in T_D \setminus D}$.
\end{definition}

\begin{example}[Tucker format]\label{example-tucker}
In Figure~\ref{fig2p}, $D=\{1,2,3,4,5,6\} $ and $$T_D=\{D,\{1\},\{2\},\{3\},\{4\},\{5\},\{6\}\}.$$
Here $\depth(T_D)=1$. This corresponds to the Tucker format.
\begin{figure}[h]
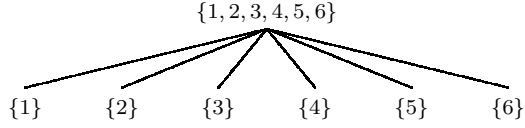

\centering
\synttree[$\{1,2,3,4,5,6\}$[$\{1\}$][$\{2\}$][$\{3\}$]
[$\{4\}$][$\{5\}$][$\{6\}$]]
\caption{Tuker format: dimension partition tree $\depth(T_D)=1$ with $S(D) =   \mathcal{L}(T_D).$}
\label{fig2p}
\end{figure}\end{example}

\begin{example}
\label{example_nobinary} In Figure~\ref{fig1p}, $D=\{1,2,3,4,5,6\}$ and 
\begin{equation*}
T_D=\{D,\{1,2,3\},\{4,5\},\{1\},\{2\},\{3\},\{4\},\{5\},\{6\}\}.
\end{equation*}
Here $\depth(T_D)=2$.
\begin{figure}[h]
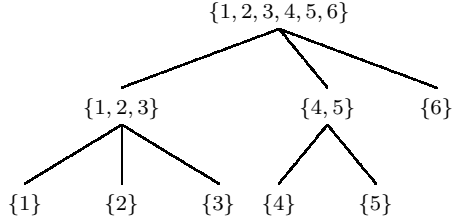

\centering
\synttree[$\{1,2,3,4,5,6\}$[$\{1,2,3\}$[$\{1\}$][$\{2\}$][$\{3\}$]]
[$\{4,5\}$[$\{4\}$][$\{5\}$]][$\{6\}$ ]]
\caption{A dimension partition tree with $\depth(T_D)=2$.
}\label{fig1p}
\end{figure}

\end{example}
%
%

Let $\mathbb{N}_{0}:=\mathbb{N} \cup\{0\} $ denote the set of non-negative integers.
For each $\mathbf{v%
} \in \mathbf{V}_D$, we have that $(\dim {U}_{\alpha}^{\min}(\mathbf{v}%
))_{\alpha \in 2^D \setminus \{\emptyset\}}$
is in $\mathbb{N}_0^{2^{\#D}-1}.$ 

\begin{definition}
For a given partition dimension tree $T_D$ over $D$, and for each $\mathbf{v}\in
\mathbf{V}_{D}$, we define its
\emph{tree-based rank} by the tuple
$\mathrm{rank}_{T_D}( \mathbf{v} ):= (\dim {U}_{\alpha }^{\min }(%
\mathbf{v}))_{\alpha \in T_{D}}\in \mathbb{N}_0^{\#T_{D}}.$
\end{definition}

\begin{definition}
We will say that $\mathfrak{r}:=(r_{\alpha })_{\alpha \in T_{D}}\in \mathbb{N%
}^{\#T_{D}}$ is an \emph{admissible tuple for $T_{D}$,} if there exists $%
\mathbf{v}\in \mathbf{V}_{D}$ such that $\dim
U_{\alpha }^{\min }(\mathbf{v})=r_{\alpha }$ for all $\alpha \in
T_{D}.$ We will denote the set of admissible ranks for the representation
$(\mathbf{V}_D,T_D)$ of the tensor space $\mathbf{V}_D$ by
$$
\mathcal{AD}(\mathbf{V}_D,T_D):=\{(\dim U_{\alpha }^{\min }(\mathbf{v}))_{\alpha \in T_D}:\mathbf{v} \in \mathbf{V}_D\}.
$$
\end{definition}

\subsection{The set of tensors in tree-based format with fixed or bounded tree-based rank}
\label{Sec_Hierar}

\begin{definition}
Let $T_{D}$ be a given dimension partition tree and fix some tuple $%
\mathfrak{r}\in \mathcal{AD}(\mathbf{V}_D,T_D)$. Then \emph{the set of 
tensors of fixed tree-based rank $\mathfrak{r}$} is defined by
\begin{equation}
\mathcal{FT}_{\mathfrak{r}}(\mathbf{V}_{D},T_D):=\left\{ \mathbf{v}\in  \mathbf{V}_{D}:\dim {U}_{\alpha }^{\min }(\mathbf{v}%
)=r_{\alpha }\text{ for all }\alpha \in T_{D}\right\}
\end{equation}%
and the \emph{set of tensors of tree-based rank bounded by $\mathfrak{r}$} is
defined by
\begin{equation}
\mathcal{FT}_{\leq \mathfrak{r}}(\mathbf{V}_{D},T_D):=\left\{ \mathbf{v}\in
\mathbf{V}_{D}:%
\begin{array}{l}
\dim {U}_{\alpha }^{\min }(\mathbf{v})\leq r_{\alpha }\text{ for all }\alpha
\in T_{D}%
\end{array}%
\right\} .  \label{(Hr}
\end{equation}
\end{definition}

For $\mathfrak{r},\mathfrak{s}\in \mathbb{%
N}_0^{\#T_{D}}$ we write $\mathfrak{s}\leq \mathfrak{r}$ if and only if $%
s_{\alpha }\leq r_{\alpha }$ for all $\alpha \in T_{D}.$ Then fo a fixed
$\mathfrak{r} \in  \mathcal{AD}(\mathbf{V}_D,T_D)$, we have 
\begin{equation}  \label{connected_id}
\mathcal{FT}_{\le \mathfrak{r}}(\mathbf{V}_{D},T_D) :=
\bigcup_{\substack{\mathfrak{s}\leq \mathfrak{r} \\ \mathfrak{s}
\in \mathcal{AD}(\mathbf{V}_D,T_D)}}\mathcal{FT}_{\mathfrak{s}}(\mathbf{V}%
_{D},T_D).
\end{equation}%
We point out that in \cite{FHN} is introduced a representation of $\mathbf{V}_D$ in Tucker format. 
Letting $T_D^{\text{Tucker}}$ be the Tucker dimension partition tree (see example \ref{example-tucker})
and given $\mathfrak{r} \in \mathcal{AD}(\mathbf{V}_D,T_D^{\text{Tucker}})$, we define the set of tensors with fixed Tucker rank $\mathfrak{r}$ by
$$
\mathfrak{M}_{\mathfrak{r}}(\mathbf{V}_D):= \mathcal{FT}_{\mathfrak{r}}(\mathbf{V}_{D},T_D^{\text{Tucker}}) =
\left\{\mathbf{v} \in \mathbf{V}_D: \dim U_{k}^{\min}(\mathbf{v}) = r_{k}, \, k \in \mathcal{L}(T_D^{\text{Tucker}}) \right\}.
$$
Then
$$
\mathbf{V}_D = \bigcup_{\mathfrak{r} \in  \mathcal{AD}(\mathbf{V}_D,T_D^{\text{Tucker}})}\mathfrak{M}_{\mathfrak{r}}(\mathbf{V}_D).
$$

\subsection{The representation of tensors in tree based format with fixed tree based rank}

Before stating the next result we recall the definition of the `matricisation' (or `unfolding') of a tensor in a
finite-dimensional setting.

\begin{definition}
\label{Def Malpha} 
Let $\alpha $ be a finite set of indices, $\mathcal{P}_{\alpha}$ be a non-trivial partition of $\alpha$, and $r= (r_\mu)_{\mu \in \mathcal{P}_\alpha} \in \mathbb{N}^{\#\mathcal{P}_\alpha}$. 
 For $\beta \in \mathcal{P}_{\alpha}$, we define a
map $\mathcal{M}_{\beta}$ 
\begin{equation*}
\begin{tabular}{llll}
$\mathcal{M}_{\beta}:$ & $\mathbb{R}^{%
\mathop{\mathchoice{\raise-0.22em\hbox{\huge $\times$}} {\raise-0.05em\hbox{\Large $\times$}}{\hbox{\large
$\times$}}{\times}}_{\mu \in \mathcal{P}_{\alpha}}r_{\mu}} $ & $\rightarrow $ & $\mathbb{R}%
^{r_{\beta  } \times \left( \prod_{\mu \in
\mathcal{P}_{\alpha} \setminus \{\beta\}}r_{\mu}\right) },$ \\
& $C_{(i_{\mu})_{\mu \in \mathcal{P}_{\alpha}}}$ & $\mapsto $ & $C_{i_{\beta},(i_{\mu})_{\mu \in \mathcal{P}_{\alpha} \setminus \{\beta\}}}$
\end{tabular},
\end{equation*}
which is an isomorphism.  
Given $C \in \mathbb{R}^{%
\mathop{\mathchoice{\raise-0.22em\hbox{\huge $\times$}}
{\raise-0.05em\hbox{\Large $\times$}}{\hbox{\large
$\times$}}{\times}}_{\mu \in \mathcal{P}_{\alpha}}r_{\mu}}$
we have that $C \in\mathfrak{M}_r\left(
\mathbb{R}^{%
\mathop{\mathchoice{\raise-0.22em\hbox{\huge $\times$}}
{\raise-0.05em\hbox{\Large $\times$}}{\hbox{\large
$\times$}}{\times}}_{\mu \in \mathcal{P}_{\alpha}}r_{\mu}}\right)$ if and only if $\mathrm{rank}\, \mathcal{M}_{\beta}(C) = r_{\beta}$ 
 for each $%
\beta \in \mathcal{P}_{\alpha},$ or equivalently $%
\mathcal{M}_{\beta}(C)\mathcal{M}_{\beta}(C)^T \in \mathrm{GL}(\mathbb{R}^{r_{\beta}})$ for $\beta \in \mathcal{P}_{\alpha}.$ 
\end{definition}

The next result gives us a characterisation of the tensors in $\mathcal{FT}_{%
\mathfrak{r}}(\mathbf{V}_{D},T_D)$.

\begin{theorem}
\label{characterisation_FT} Let $%
T_{D}$ be a dimension partition tree over $D$ with $\depth(T_D) = \mathfrak{d}.$ 
Given $\mathfrak{r} \in \mathcal{AD}(\mathbf{V}_D,T_D)$ then the 
following statements are equivalent.

\begin{enumerate}
\item[(a)] $\mathbf{v}\in \mathcal{FT}_{\mathfrak{r}}(\mathbf{V}_{D},T_D).$

\item[(b)] Given $\{u_{i_k}^{(k)}:1 \le i_k \le r_k\}$ a fixed basis of $%
U_k^{\min}(\mathbf{v})$ for $k\in \mathcal{L}(T_D)$,  
\begin{equation}
\mathbf{v}=\sum_{\substack{ 1\leq i_{\alpha }\leq r_{\alpha }  \\ \alpha \in
S(D)}}C_{(i_{\alpha })_{\alpha \in S(D)}}^{(D)}\bigotimes_{\alpha \in S(D)}%
\mathbf{u}_{i_{\alpha }}^{(\alpha )},\label{TBRT1}
\end{equation}%
for a unique $C^{(D)} \in \textcolor{red}{\mathfrak{M}_{r}(\mathbb{R}^{ 
\mathop{\mathchoice{\raise-0.22em\hbox{\huge $\times$}}
{\raise-0.05em\hbox{\Large $\times$}}{\hbox{\large
$\times$}}{\times}}_{\beta \in S(D)}r_{\beta }})}$ and
where for each $\mu \in
T_D \setminus \{D\}$ such that $S(\mu )\neq \emptyset ,$ there exists a unique ${C}^{(\mu)}
\in \mathbb{R}^{r_{\mu} \times 
\mathop{\mathchoice{\raise-0.22em\hbox{\huge $\times$}}
{\raise-0.05em\hbox{\Large $\times$}}{\hbox{\large
$\times$}}{\times}}_{\beta \in S(\mu)}r_{\beta }}$ such that 
$\mathrm{rank}\, \mathcal{M}_{\mu}(C^{(\mu)})=\dim U_{\mu}^{\min}(\mathbf{v}) 
= r_{\mu},$ and the set $\{%
\mathbf{u}_{i_{\mu }}^{(\mu )}:1\leq i_{\mu }\leq r_{\mu}\},$ with
\begin{equation}
\mathbf{u}_{i_{\mu }}^{(\mu )}=\sum_{\substack{ 1\leq i_{\beta }\leq
r_{\beta }  \\ \beta \in S(\mu )}}C_{i_{\mu },(i_{\beta })_{\beta \in S(\mu
)}}^{(\mu )}\bigotimes_{\beta \in S(\mu )}\mathbf{u}_{i_{\beta }}^{(\beta )}
\label{TBRT2}
\end{equation}%
for $1\leq i_{\mu }\leq r_{\mu },$ is a basis of $U_{\mu}^{\min }(\mathbf{v}%
).$
\end{enumerate}
\end{theorem}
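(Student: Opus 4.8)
The plan is to prove the equivalence by induction on the depth $\mathfrak{d}$ of the tree $T_D$, reducing the general tree-based statement to the single-level Tucker case via the subtree structure. The base case $\mathfrak{d}=1$ is precisely the Tucker format: here $S(D)=\mathcal{L}(T_D)$, and there are no intermediate vertices $\mu$. In that case, $(a)\Leftrightarrow(b)$ amounts to the classical statement that $\mathbf{v}$ has Tucker rank exactly $\mathfrak{r}$ if and only if, writing $\mathbf{v}$ in a fixed basis of the minimal subspaces $U_k^{\min}(\mathbf{v})$, the core tensor $C^{(D)}$ satisfies $\operatorname{rank}\mathcal{M}_\beta(C^{(D)})=r_\beta$ for every $\beta\in S(D)$; this is exactly the content of Definition~\ref{Def Malpha} together with Proposition~\ref{minimal}, using that the matricisation $\mathcal{M}_\beta(C^{(D)})$ represents, in coordinates, the map whose range is $U_\beta^{\min}(\mathbf{v})$ (by Proposition~\ref{(Ualpha in Tensor Uj coro} applied with $\alpha=D$).

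\textbf{For the inductive step}, I would argue as follows. Suppose the result holds for all dimension partition trees of depth less than $\mathfrak{d}$. Given $T_D$ of depth $\mathfrak{d}$, apply Proposition~\ref{(Ualpha in Tensor Uj coro} with $\alpha=D$ and $\mathcal{P}_\alpha=S(D)$: for each $\beta\in S(D)$ with $\#\beta\ge 2$, the subspace $U_\beta^{\min}(\mathbf{v})$ is obtained by contracting $\mathbf{v}$ against dual functionals on the complementary factors, and crucially $U_\gamma^{\min}(\mathbf{v})$ computed inside the "small" tensor space $\bigotimes_{k\in\beta}U_k^{\min}(\mathbf{v})$ coincides with $U_\gamma^{\min}(\mathbf{v})$ computed in $\mathbf{V}_D$, for every $\gamma\in T_D$ with $\gamma\subseteq\beta$ — this follows from Proposition~\ref{inclusin_Umin} and the characterisation in Proposition~\ref{(Ualpha in Tensor Uj coro}. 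Thus the subtree $T_\beta:=\{\gamma\in T_D:\gamma\subseteq\beta\}$ is a dimension partition tree of $\beta$ with depth at most $\mathfrak{d}-1$, and the minimal-subspace data of $\mathbf{v}$ restricted to $T_\beta$ is exactly the tree-based rank of any $\mathbf{v}_\beta\in U_\beta^{\min}(\mathbf{v})$ relative to $T_\beta$. Now $(a)$ says $\dim U_\gamma^{\min}(\mathbf{v})=r_\gamma$ for all $\gamma\in T_D$; splitting $T_D=\{D\}\cup\bigcup_{\beta\in S(D)}T_\beta$, this is equivalent to: the Tucker-level condition $\operatorname{rank}\mathcal{M}_\beta(C^{(D)})=r_\beta$ holds at the root (base-case argument), AND for each $\beta\in S(D)$ with $\#\beta\ge 2$, every basis vector $\mathbf{u}_{i_\beta}^{(\beta)}$ lies in $\mathcal{FT}_{\mathfrak{r}|_{T_\beta}}(\mathbf{V}_\beta,T_\beta)$ with $U_\gamma^{\min}$ matching. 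Applying the induction hypothesis to each such $\beta$ yields the existence and uniqueness of the cores $C^{(\mu)}$ for $\mu\subsetneq\beta$ together with the nested representation~\eqref{TBRT2}, and conversely.

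\textbf{The main obstacle is the bookkeeping of "locality" of minimal subspaces}: one must verify carefully that $U_\gamma^{\min}(\mathbf{v})$, for $\gamma$ a descendant of $\beta\in S(D)$, can equivalently be computed by first projecting $\mathbf{v}$ onto $\bigotimes_{\alpha\in S(D)}U_\alpha^{\min}(\mathbf{v})$ and then taking the minimal subspace of the resulting element of the smaller tensor space $\mathbf{V}_\beta$ — and that this is compatible with choosing, inside $U_\beta^{\min}(\mathbf{v})$, a single generic $\mathbf{v}_\beta$ whose $T_\beta$-tree-based rank equals $(\dim U_\gamma^{\min}(\mathbf{v}))_{\gamma\in T_\beta}$. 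This requires combining Proposition~\ref{inclusin_Umin} (the inclusion $U_\beta^{\min}(\mathbf{v})\subset\bigotimes_{\gamma\in S(\beta)}U_\gamma^{\min}(\mathbf{v})$, and recursively down the subtree) with Proposition~\ref{(Ualpha in Tensor Uj coro} to see that the inclusion is in fact tight dimension-wise, i.e. that the minimal subspace of $U_\beta^{\min}(\mathbf{v})$ relative to any internal split agrees with the corresponding $U_\gamma^{\min}(\mathbf{v})$. Once this compatibility is established, the recursion closes cleanly, the matricisation-rank conditions translate the dimension conditions level by level, and uniqueness of each $C^{(\mu)}$ follows from fixing the bases $\{u_{i_k}^{(k)}\}$ at the leaves and propagating uniqueness upward through~\eqref{TBRT2} and~\eqref{TBRT1}.
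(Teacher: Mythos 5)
There is a genuine gap in the inductive step. Your induction hypothesis is a statement about a \emph{tensor} in $\mathbf{V}_\beta$, so to apply it below a son $\beta\in S(D)$ you must produce some $\mathbf{w}\in\mathbf{V}_\beta$ with $U_\gamma^{\min}(\mathbf{w})=U_\gamma^{\min}(\mathbf{v})$ for all $\gamma\in T_\beta$. You propose first the basis vectors $\mathbf{u}_{i_\beta}^{(\beta)}$ and then a ``single generic $\mathbf{v}_\beta\in U_\beta^{\min}(\mathbf{v})$'', but neither works: an individual element of $U_\beta^{\min}(\mathbf{v})$ only satisfies $U_\gamma^{\min}(\mathbf{v}_\beta)\subseteq U_\gamma^{\min}(\mathbf{v})$, and the inclusion can be strict for \emph{every} element of $U_\beta^{\min}(\mathbf{v})$. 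Concretely, take $d=3$, $\beta=\{1,2\}$ and $\mathbf{v}=e_1\otimes f_1\otimes g_1+e_1\otimes f_2\otimes g_2$ with $f_1,f_2$ and $g_1,g_2$ linearly independent; then $U_{\{1,2\}}^{\min}(\mathbf{v})=\mathrm{span}\{e_1\otimes f_1,\,e_1\otimes f_2\}$ and $\dim U_2^{\min}(\mathbf{v})=2$, yet every element of $U_{\{1,2\}}^{\min}(\mathbf{v})$ is of the form $e_1\otimes(af_1+bf_2)$ and hence has a one-dimensional minimal subspace in the second variable. So the object to which you want to apply the induction hypothesis need not exist, and the recursion does not close as stated. (Your base case is fine.)

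The fix --- and what the paper actually does --- is to never leave the original tensor $\mathbf{v}$: Proposition~\ref{inclusin_Umin} gives the nestedness $U_\mu^{\min}(\mathbf{v})\subset\left._a\bigotimes_{\beta\in S(\mu)}U_\beta^{\min}(\mathbf{v})\right.$ simultaneously at every interior vertex $\mu$, so once bases of all the $U_\beta^{\min}(\mathbf{v})$ are chosen, the core $C^{(\mu)}$ is simply the (unique) coordinate tensor of the chosen basis of $U_\mu^{\min}(\mathbf{v})$ in the induced product basis, and $\mathrm{rank}\,\mathcal{M}_\mu(C^{(\mu)})=r_\mu$ because those $r_\mu$ vectors are linearly independent. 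This is a direct, one-pass argument over the vertices; no induction on the depth and no localisation of minimal subspaces to a sub-tensor is needed. If you insist on an induction, the hypothesis must be reformulated in terms of the nested \emph{family of subspaces} $(U_\gamma^{\min}(\mathbf{v}))_{\gamma\in T_\beta}$ rather than in terms of a tensor of $\mathbf{V}_\beta$.
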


\begin{proof}
(b) clearly implies (a). Now consider $\mathbf{v} \in \mathcal{FT}_{\mathfrak{r}}(\mathbf{V}_{D},T_D).$
Since $\mathbf{v} \in \bigotimes_{\alpha \in S(D)} U^{\min}_{\alpha}(\mathbf{v}) $,
there exists a unique $C^{(D)} \in  \mathbb{R}^{ 
\mathop{\mathchoice{\raise-0.22em\hbox{\huge $\times$}}
{\raise-0.05em\hbox{\Large $\times$}}{\hbox{\large
$\times$}}{\times}}_{\beta \in S(D)}r_{\beta }}$ such that 
$$
\mathbf{v}=\sum_{\substack{ 1\leq i_{\alpha }\leq r_{\alpha }  \\ \alpha \in
S(D)}}C_{(i_{\alpha })_{\alpha \in S(D)}}^{(D)}\bigotimes_{\alpha \in S(D)}%
\mathbf{u}_{i_{\alpha }}^{(\alpha )},
$$
where $\{\mathbf{u}_{i_{\alpha}}^{(\alpha)}: 1 \le i_{\alpha} \le r_{\alpha}\}$ is a fixed basis
of $U_{\alpha}^{\min}(\mathbf{v})$ for $\alpha \in S(D).$  
\textcolor{red}{Since $\mathrm{rank}\,\mathcal{M}_{\alpha}(C^{(D)}) = \dim U_{\alpha}^{\min}(\mathbf{v}) = r_{\alpha}$
for each $\alpha \in S(D),$ we have that $C^{(D)} \in \mathfrak{M}_{r}(\mathbb{R}^{ 
\mathop{\mathchoice{\raise-0.22em\hbox{\huge $\times$}}
{\raise-0.05em\hbox{\Large $\times$}}{\hbox{\large
$\times$}}{\times}}_{\beta \in S(D)}r_{\beta }}).$}
Now, for each $\mu \in
T_{D}\setminus \{D\}$ such that $S(\mu )\neq \emptyset ,$ thanks to
Proposition~\ref{inclusin_Umin}, we have 
\begin{equation*}
U_{\mu }^{\min }(\mathbf{v})\subset \left. _{a}\bigotimes_{\beta \in S(\mu
)}U_{\beta }^{\min }(\mathbf{v})\right..
\end{equation*}
Consider $\{\mathbf{u}_{i_{\mu }}^{(\mu )}:1\leq i_{\mu }\leq r_{\mu }\}$ a
basis of $U_{\mu }^{\min }(\mathbf{v})$ and $\{\mathbf{u}_{i_{\beta
}}^{(\beta )}:1\leq i_{\beta }\leq r_{\beta }\}$ a basis of $U_{\beta
}^{\min }(\mathbf{v})$ for $\beta \in S(\mu )$ and $1\leq i_{\mu }\leq
r_{\mu }.$ Then, there exists a unique $C^{(\mu )}\in \mathbb{R}^{r_{\mu
}\times \left( 
\mathop{\mathchoice{\raise-0.22em\hbox{\huge $\times$}}
{\raise-0.05em\hbox{\Large $\times$}}{\hbox{\large
$\times$}}{\times}}_{\beta \in S(\alpha )}r_{\beta }\right) }$ such that 
\begin{equation*}
\mathbf{u}_{i_{\mu }}^{(\mu )}=\sum_{\substack{ 1\leq i_{\beta }\leq
r_{\beta }  \\ \beta \in S(\mu )}}C_{i_{\mu },(i_{\beta })_{\beta \in S(\mu
)}}^{(\mu )}\bigotimes_{\beta \in S(\mu )}\mathbf{u}_{i_{\beta }}^{(\beta )},
\end{equation*}%
for $1\leq i_{\mu }\leq r_{\mu }.$ Since $\{\mathbf{u}_{i_{\mu }}^{(\mu
)}:1\leq i_{\mu }\leq r_{\mu }\}$ is a basis, then
\begin{equation}\label{rank_relation}
\mathrm{rank}\,\mathcal{M}_{\mu }(C^{(\mu )}) = \dim U_{\mu}^{\min}(\mathbf{v}) = r_{\mu}
,\end{equation}
holds for each $\mu \in
T_{D}\setminus \{D\}$ such that $S(\mu )\neq \emptyset.$ Then (c) holds.
 \qed
\end{proof}

\section{Topological tensor spaces in the tree-based format}\label{TTT_TBF}

First, we recall the definition of tensor Banach spaces.

\begin{definition}
\label{Banach tensor product space}We say that $\mathbf{V}_{\left\Vert
\cdot\right\Vert }$ is a \emph{Banach tensor space} if there exists an
algebraic tensor space $\mathbf{V}$ and a norm $\left\Vert \cdot\right\Vert $
on $\mathbf{V}$ such that $\mathbf{V}_{\left\Vert \cdot\right\Vert }$ is the
completion of $\mathbf{V}$ with respect to the norm $\left\Vert
\cdot\right\Vert $, i.e.
\begin{equation*}
\mathbf{V}_{\left\Vert \cdot\right\Vert }:=\left. _{\left\Vert \cdot
\right\Vert }\bigotimes_{j=1}^{d}V_{j}\right. =\overline{\left.
_{a}\bigotimes\nolimits_{j=1}^{d}V_{j}\right. }^{\left\Vert \cdot\right\Vert
}.
\end{equation*}
If $\mathbf{V}_{\left\Vert \cdot\right\Vert }$ is a Hilbert space, we say
that $\mathbf{V}_{\left\Vert \cdot\right\Vert }$ is a \emph{Hilbert tensor
space}.
\end{definition}

Next, we give some examples of Banach and Hilbert tensor spaces.

\begin{example}
\label{Bsp HNp}For $I_{j}\subset \mathbb{R}$ $\left( 1\leq j\leq d\right) $
and $1\leq p<\infty ,$ the Sobolev space $H^{N,p}(I_{j})$ consists of all
univariate functions $f$ from $L^{p}(I_{j})$ with bounded norm\footnote{%
It suffices to have in \eqref{(SobolevNormp a} the terms $n=0$ and $n=N.$
The derivatives are to be understood as weak derivatives.}%
\begin{subequations}
\label{(SobolevNormp}%
\begin{equation*}
\left\Vert f\right\Vert _{N,p;I_{j}}:=\bigg(\sum_{n=0}^{N}\int_{I_{j}}\left%
\vert \partial ^{n}f\right\vert ^{p}\mathrm{d}x\bigg)^{1/p},
\label{(SobolevNormp a}
\end{equation*}%
whereas the space $H^{N,p}(\mathbf{I})$ of $d$-variate functions on $\mathbf{%
I}=I_{1}\times I_{2}\times \ldots \times I_{d}\subset \mathbb{R}^{d}$ is
endowed with the norm%
\begin{equation*}
\left\Vert f\right\Vert _{N,p}:=\Big(\sum_{0\leq \left\vert \mathbf{n}%
\right\vert \leq N}\int_{\mathbf{I}}\left\vert \partial ^{\mathbf{n}%
}f\right\vert ^{p}\mathrm{d}\mathbf{x}\Big)^{1/p}  \label{(SobolevNormp b}
\end{equation*}%
\end{subequations}%
with $\mathbf{n}\in \mathbb{N}_{0}^{d}$ being a multi-index of length $%
\left\vert \mathbf{n}\right\vert :=\sum_{j=1}^{d}n_{j}$. For $p>1$ it is
well known that $H^{N,p}(I_{j})$ and $H^{N,p}(\mathbf{I})$ are reflexive and
separable Banach spaces. Moreover, for $p=2,$ the Sobolev spaces $%
H^{N}(I_{j}):=H^{N,2}(I_{j})$ and $H^{N}(\mathbf{I}):=H^{N,2}(\mathbf{I})$
are Hilbert spaces. As a first example,%
\begin{equation*}
H^{N,p}(\mathbf{I})=\left. _{\left\Vert \cdot \right\Vert
_{N,p}}\bigotimes_{j=1}^{d}H^{N,p}(I_{j})\right.
\end{equation*}%
is a Banach tensor space. Examples of Hilbert tensor spaces are%
\begin{equation*}
L^{2}(\mathbf{I})=\left. _{\left\Vert \cdot \right\Vert
_{0,2}}\bigotimes_{j=1}^{d}L^{2}(I_{j})\right. \text{\quad and\quad }H^{N}(%
\mathbf{I})=\left. _{\left\Vert \cdot \right\Vert
_{N,2}}\bigotimes_{j=1}^{d}H^{N}(I_{j})\right. \text{ for }N\in \mathbb{N}.
\end{equation*}
\end{example}

In the definition of a tensor Banach space $\left._{\|\cdot\|} \bigotimes_{j
\in D} V_j \right.$ we have not fixed whether  the $V_j,$ for $j \in D,$ are
complete or not. This leads us to introduce the following definition.

\begin{definition}
Let $D$ be a finite index set and $T_{D}$ be a dimension partition tree over $D$. Let 
$(V_{j},\Vert \cdot \Vert _{j})$ be a normed space such that $V_{j_{\Vert
\cdot \Vert _{j}}}$ is a Banach space obtained by the completion of $V_{j},$
for $j\in D,$ and consider a representation $\{\mathbf{V}_{\alpha
}\}_{\alpha \in T_{D}\setminus \{D\}}$ of the tensor space $\mathbf{V}%
_{D}=\left. _{a}\bigotimes_{j\in D}V_{j}\right. $ where for each $\alpha \in
T_{D}\setminus \mathcal{L}(T_{D})$ we have a tensor space $\mathbf{V}%
_{\alpha }=\left. _{a}\bigotimes_{\beta \in S(\alpha )}\mathbf{V}_{\beta
}\right. .$ If for each $\alpha \in T_{D}\setminus \mathcal{L}(T_{D})$ there
exists a norm $\Vert \cdot \Vert _{\alpha }$ defined on $\mathbf{V}_{\alpha
} $ such that $\mathbf{V}_{\alpha _{\Vert \cdot \Vert _{\alpha }}}=\left.
_{\Vert \cdot \Vert _{\alpha }}\bigotimes_{\beta \in S(\alpha )} \mathbf{V}_{\beta
}\right. $ is a tensor Banach space, we say that $\{\mathbf{V}_{\alpha
_{\Vert \cdot \Vert _{\alpha }}}\}_{\alpha \in T_{D}\setminus \{D\}}$ is a
representation of the tensor Banach space $\mathbf{V}_{D_{\Vert \cdot \Vert
_{D}}}=\left. _{\Vert \cdot \Vert _{D}}\bigotimes_{j\in D}V_{j}\right. $ in
the \emph{topological tree-based format}.
\end{definition}

For $\alpha\in T_D \setminus \mathcal{L}(T_D)$,
\begin{equation*}
\mathbf{V}_{\alpha_{\|\cdot\|_{\alpha}}} = 
\left._{\|\cdot\|_{\alpha}} \bigotimes_{j\in \alpha} V_{j}\right. = \left._{\|\cdot\|_{\alpha}} \bigotimes_{\beta \in S(\alpha)} \mathbf{V}_{\beta}\right. .
\end{equation*}
\begin{example}
Figure \ref{figZp} gives an example of a representation in the topological
tree-based format for an anisotropic Sobolev space.
\end{example}

\begin{figure}[h]
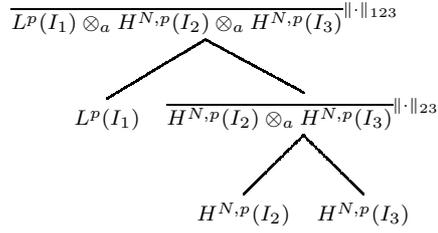

\centering
\synttree[$\overline{L^p(I_1) \otimes _a H^{N,p}(I_2) \otimes_{a} H^{N,p}(I_3)}^{\|\cdot\|_{123}}$
[$L^p(I_1)$]
[$\overline{H^{N,p}(I_2) \otimes_{a} H^{N,p}(I_3)}^{\|\cdot \|_{23}}$[$H^{N,p}(I_2)$][$H^{N,p}(I_3)$]
]]
\caption{A representation in the topological tree-based format for the
tensor Banach space $\overline{L^p(I_1) \otimes _a H^{N,p}(I_2) \otimes_{a}
H^{N,p}(I_3)}^{\|\cdot\|_{123}}.$ Here $\|\cdot\|_{23}$ and $\|\cdot\|_{123}$
are given norms.}
\label{figZp}
\end{figure}

\begin{remark}
Observe that the example in Figure \ref{figZp1} is not included in the
definition of the topological tree-based format. Moreover, for a tensor $%
\mathbf{v}\in L^{p}(I_{1})\otimes _{a}(H^{N,p}(I_{2})\otimes _{\Vert \cdot
\Vert _{23}}H^{N,p}(I_{3})),$ we have $U_{23}^{\min }(\mathbf{v})\subset
H^{N,p}(I_{2})\otimes _{\Vert \cdot \Vert _{23}}H^{N,p}(I_{3}).$ However, in
the topological tree-based representation of Figure \ref{figZp}, for a given 
$\mathbf{v}\in L^{p}(I_{1})\otimes _{a}H^{N,p}(I_{2})\otimes
_{a}H^{N,p}(I_{3})$ we have $U_{23}^{\min }(\mathbf{v})\subset
H^{N,p}(I_{2})\otimes _{a}H^{N,p}(I_{3}),$ and hence $U_{23}^{\min }(\mathbf{%
v})\subset U_{2}^{\min }(\mathbf{v})\otimes _{a}U_{3}^{\min }(\mathbf{v}).$
\end{remark}
\begin{figure}[h]
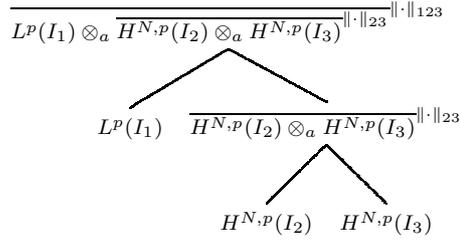

\centering
\synttree[$\overline{L^p(I_1) \otimes _a \overline{H^{N,p}(I_2) \otimes_a H^{N,p}(I_3)}^{\|\cdot \|_{23}}}^{\|\cdot\|_{123}}$
[$L^p(I_1)$]
[$\overline{H^{N,p}(I_2) \otimes_a H^{N,p}(I_3)}^{\|\cdot \|_{23}}$[$H^{N,p}(I_2)$][$H^{N,p}(I_3)$]
]]
\caption{A representation for the tensor Banach space $\overline{%
L^{p}(I_{1})\otimes _{a}\overline{H^{N,p}(I_{2})\otimes _{a}H^{N,p}(I_{3})}%
^{\Vert \cdot \Vert _{23}}}^{\Vert \cdot \Vert _{123}},$ using a tree. Here $%
\Vert \cdot \Vert _{23}$ and $\Vert \cdot \Vert _{123}$ are given norms.}
\label{figZp1}
\end{figure}

The difference between the tensor spaces involved in Figure \ref{figZp} and
Figure \ref{figZp1} is given by the fact that since
\begin{equation*}
H^{N,p}(I_2) \otimes_{a} H^{N,p}(I_3) \subset
\overline{H^{N,p}(I_2) \otimes_a H^{N,p}(I_3)}^{\|\cdot \|_{23}}
\end{equation*}
then
\begin{equation*}
\overline{L^p(I_1) \otimes _a H^{N,p}(I_2) \otimes_{a} H^{N,p}(I_3)}^{\|\cdot\|_{123}} \subset
\overline{L^p(I_1) \otimes _a \overline{H^{N,p}(I_2) \otimes_a H^{N,p}(I_3)}}^{\|\cdot \|_{23}}.
\end{equation*}
A desirable property for the tensor product is that if $\|\cdot\|_{\alpha}$
for each $\alpha \in T_D \setminus \mathcal{L}(T_D)$ is a norm on the 
tensor space $\left._{a} \bigotimes_{\beta \in
S(\alpha)} \mathbf{V}_{\beta_{\|\cdot\|_{\beta}}}\right.,$ then 
\begin{align}  \label{desirable_property}
\left._{\|\cdot\|_{\alpha}} \bigotimes_{\beta \in S(\alpha)} \mathbf{V}%
_{\beta_{\|\cdot\|_{\beta}}}\right. = \left._{\|\cdot\|_{\alpha}}
\bigotimes_{\beta \in S(\alpha)} \mathbf{V}_{\beta}\right. =
\left._{\|\cdot\|_{\alpha}} \bigotimes_{j\in \alpha} V_{j}\right.
\end{align}
must be true. To precise
these ideas, we introduce the following definitions and results.

Let $\left\Vert \cdot\right\Vert _{j},$ $1\leq j\leq d,$ be the norms of the
vector spaces $V_{j}$ appearing in $\mathbf{V}_D=\left. _{a}\bigotimes
\nolimits_{j=1}^{d}V_{j}\right. .$ By $\left\Vert \cdot\right\Vert_D $ we
denote the norm on the tensor space $\mathbf{V}_D$. Note that $%
\left\Vert\cdot\right\Vert_D$ is not determined by $\left\Vert
\cdot\right\Vert _{j},$ for $j \in D,$ but there are relations which are
`reasonable'. Any norm $\left\Vert \cdot\right\Vert $ on $\left.
_{a}\bigotimes_{j=1}^{d}V_{j}\right. $ satisfying%
\begin{equation}
\Big{\|}%
\bigotimes\nolimits_{j=1}^{d}v_j%
\Big{\|}%
=\prod\nolimits_{j=1}^{d}\Vert v_j\Vert_{j}\qquad\text{for all }v_j\in V_{j}%
\text{ }\left( 1\leq j\leq d\right)  \label{(rcn a}
\end{equation}
is called a \emph{crossnorm}. As usual, the dual norm of $\left\Vert
\cdot\right\Vert $ is denoted by $\left\Vert \cdot\right\Vert ^{\ast}$. If $%
\left\Vert \cdot\right\Vert $ is a crossnorm and also $\left\Vert
\cdot\right\Vert ^{\ast}$ is a crossnorm on $\left.
_{a}\bigotimes_{j=1}^{d}V_{j}^{\ast}\right. $, i.e.,%
\begin{equation}
\Big{\|}%
\bigotimes\nolimits_{j=1}^{d}\varphi^{(j)}%
\Big{\|}%
^{\ast}=\prod\nolimits_{j=1}^{d}\Vert\varphi^{(j)}\Vert_{j}^{\ast}\qquad%
\text{for all }\varphi^{(j)}\in V_{j}^{\ast}\text{ }\left( 1\leq j\leq
d\right) ,  \label{(rcn b}
\end{equation}
then $\left\Vert \cdot\right\Vert $ is called a \emph{reasonable crossnorm}.

\begin{remark}
\label{tensor product continuity}Eq. \eqref{(rcn a} implies the inequality $%
\Vert\bigotimes\nolimits_{j=1}^{d}v_j\Vert\lesssim\prod\nolimits_{j=1}^{d}%
\Vert v_j\Vert_{j}$ which is equivalent to the continuity of the multilinear
tensor product mapping\footnote{%
Recall that a multilinear map $T$ from $%
\mathop{\mathchoice{\raise-0.22em\hbox{\huge $\times$}} {\raise-0.05em\hbox{\Large $\times$}}{\hbox{\large
$\times$}}{\times}}_{j=1}^{d} (V_{j},\|\cdot\|_j)$ equipped with the product
topology to a normed space $(W,\Vert\cdot\Vert)$ is continuous if and only
if $\Vert T\Vert <\infty$, with 
\begin{align*}
\| T \| := \sup_{\substack{ (v_1,\hdots,v_d)  \\ \|(v_1,\hdots,v_d)\|\le 1}}
\|T(v_{1},\ldots,v_{d})\| &= \sup_{\substack{ (v_1,\hdots,v_d)  \\ %
\|v_1\|_1\le 1,\hdots, \|v_d\|_d\le 1}} \|T(v_{1},\ldots,v_{d})\| = \sup_{{%
(v_1,\hdots,v_d)}} \frac{\|T (v_{1},\ldots,v_{d})\|}{\|v_1\|_1\hdots %
\|v_d\|_d}.
\end{align*}%
} between normed spaces: 
\begin{equation}
\bigotimes :\mathop{\mathchoice{\raise-0.22em\hbox{\huge $\times$}}
{\raise-0.05em\hbox{\Large $\times$}}{\hbox{\large $\times$}}{\times}}%
_{j=1}^{d}\left( V_{j},\left\Vert \cdot\right\Vert _{j}\right)
\longrightarrow%
\bigg(%
\left. _{a}\bigotimes_{j=1}^{d}V_{j}\right. ,\left\Vert \cdot\right\Vert 
\bigg)%
,  \label{(Tensorproduktabbildung}
\end{equation}
defined by $\bigotimes\left( (v_{1},\ldots,v_{d})\right)
=\bigotimes_{j=1}^{d}v_{j} $, the product space being equipped with the
product topology induced by the maximum norm $\|(v_1,\hdots,v_d)\| =
\max_{1\le j\le d} \|v_j\|_j$.\smallskip
\end{remark}

The following result is a consequence of Lemma~4.34 of \cite{Hackbusch}.

\begin{lemma}
\label{lemma434} Let $(V_j,\|\cdot\|_j)$ be normed spaces for $1 \le j \le
d. $ Assume that $\|\cdot\|$ is a norm on the tensor space $\left.
_{a}\bigotimes_{j=1}^{d}V_{j_{\|\cdot\|_j}}\right.$ such that the tensor
product map 
\begin{equation}
\bigotimes :\mathop{\mathchoice{\raise-0.22em\hbox{\huge $\times$}}
{\raise-0.05em\hbox{\Large $\times$}}{\hbox{\large $\times$}}{\times}}
_{j=1}^{d}\left( V_{j_{\|\cdot\|_j}},\left\Vert \cdot\right\Vert _{j}\right)
\longrightarrow 
\bigg(
\left. _{a}\bigotimes_{j=1}^{d}V_{j_{\|\cdot\|_j}}\right. ,\left\Vert
\cdot\right\Vert 
\bigg)
\label{(Tensorproduktabbildung1}
\end{equation}
is continuous. Then \eqref{(Tensorproduktabbildung} is also continuous and 
\begin{align*}
\left. _{\|\cdot\|}\bigotimes_{j=1}^d V_{j_{\|\cdot\|_j}}\right. = \left.
_{\|\cdot\|}\bigotimes_{j=1}^d V_{j}\right.
\end{align*}
holds.
\end{lemma}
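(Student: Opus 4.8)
The statement to prove is Lemma~\ref{lemma434}, which asserts that if a norm $\|\cdot\|$ on $\left._a\bigotimes_{j=1}^d V_{j_{\|\cdot\|_j}}\right.$ makes the tensor product map \eqref{(Tensorproduktabbildung1} continuous (where the factors are the \emph{completed} spaces), then the same map restricted to the original spaces \eqref{(Tensorproduktabbildung} is continuous, and moreover the completion of the algebraic tensor product of the $V_j$ coincides with the completion of the algebraic tensor product of the $V_{j_{\|\cdot\|_j}}$. The plan is to deduce everything from Lemma~4.34 of \cite{Hackbusch}, as the statement announces, after setting up the right identifications.

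\medskip
\textbf{Step 1: Continuity of \eqref{(Tensorproduktabbildung} is immediate.} Since $V_j \subseteq V_{j_{\|\cdot\|_j}}$ isometrically, the algebraic tensor space $\left._a\bigotimes_{j=1}^d V_j\right.$ is a linear subspace of $\left._a\bigotimes_{j=1}^d V_{j_{\|\cdot\|_j}}\right.$, and $\|\cdot\|$ restricts to a norm on the former. The map \eqref{(Tensorproduktabbildung} is then the restriction of \eqref{(Tensorproduktabbildung1} to the subset $\mathop{\times}_{j=1}^d (V_j,\|\cdot\|_j) \subseteq \mathop{\times}_{j=1}^d (V_{j_{\|\cdot\|_j}},\|\cdot\|_j)$, and the operator norm in Remark~\ref{tensor product continuity} can only decrease under such a restriction; hence $\|\bigotimes\| \le \infty$ is inherited. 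So the first assertion needs no work beyond this observation.

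\medskip
\textbf{Step 2: The equality of completions.} Here is where I would invoke Lemma~4.34 of \cite{Hackbusch}. The key point is that $\left._a\bigotimes_{j=1}^d V_j\right.$ is \emph{dense} in $\left._a\bigotimes_{j=1}^d V_{j_{\|\cdot\|_j}}\right.$ with respect to $\|\cdot\|$: given an elementary tensor $\bigotimes_{j=1}^d w_j$ with $w_j \in V_{j_{\|\cdot\|_j}}$, pick sequences $v_j^{(n)} \in V_j$ with $v_j^{(n)} \to w_j$ in $\|\cdot\|_j$; then by continuity of \eqref{(Tensorproduktabbildung1} (multilinearity plus the bound $\|\bigotimes\|<\infty$, via the standard telescoping estimate $\|\bigotimes w_j - \bigotimes v_j^{(n)}\| \le \|\bigotimes\| \sum_j \|w_j - v_j^{(n)}\|_j \prod_{k<j}\|w_k\|_k \prod_{k>j}\|v_k^{(n)}\|_k$) we get $\bigotimes v_j^{(n)} \to \bigotimes w_j$ in $\|\cdot\|$. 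Finite sums of elementary tensors are dense in the algebraic tensor space, so $\left._a\bigotimes_j V_j\right.$ is $\|\cdot\|$-dense in $\left._a\bigotimes_j V_{j_{\|\cdot\|_j}}\right.$. Since taking the completion of a normed space is insensitive to passing to a dense subspace,
\begin{equation*}
\left._{\|\cdot\|}\bigotimes_{j=1}^d V_{j_{\|\cdot\|_j}}\right. = \overline{\left._a\bigotimes_{j=1}^d V_{j_{\|\cdot\|_j}}\right.}^{\|\cdot\|} = \overline{\left._a\bigotimes_{j=1}^d V_j\right.}^{\|\cdot\|} = \left._{\|\cdot\|}\bigotimes_{j=1}^d V_j\right.,
\end{equation*}
which is the claimed identity.

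\medskip
\textbf{Main obstacle.} There is no deep obstacle: the lemma is essentially a bookkeeping consequence of density plus continuity of multilinear maps, and the paper itself flags it as "a consequence of Lemma~4.34 of \cite{Hackbusch}". The only point requiring a little care is making sure the telescoping/continuity estimate in Step 2 is legitimate — i.e. that continuity of \eqref{(Tensorproduktabbildung1} genuinely yields the quantitative bound on differences of elementary tensors needed for the density argument — but this is exactly the content of the operator-norm characterisation recalled in Remark~\ref{tensor product continuity}, so it is available for free. I would therefore keep the proof short, citing Lemma~4.34 of \cite{Hackbusch} for Step 2 and spelling out only Step 1 and the density observation.
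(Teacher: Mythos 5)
Your proof is correct. The paper itself gives no argument for this lemma---it is stated bare, with only the remark that it ``is a consequence of Lemma~4.34 of \cite{Hackbusch}''---so what you have done is supply the standard argument that this citation encapsulates, and you have done so accurately: continuity of \eqref{(Tensorproduktabbildung} follows by restricting the continuous multilinear map \eqref{(Tensorproduktabbildung1} to the subspaces $V_j \subseteq V_{j_{\|\cdot\|_j}}$, and the equality of completions follows from the $\|\cdot\|$-density of $\left._a\bigotimes_j V_j\right.$ in $\left._a\bigotimes_j V_{j_{\|\cdot\|_j}}\right.$, which your telescoping estimate establishes correctly (the boundedness of the factors $\prod_{k>j}\|v_k^{(n)}\|_k$ being the only point needing the observation that $\|v_k^{(n)}\|_k \to \|w_k\|_k$, which you make). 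One cosmetic remark: every element of the algebraic tensor space \emph{is} a finite sum of elementary tensors, so the phrase ``finite sums of elementary tensors are dense in the algebraic tensor space'' is an understatement of an equality; the density you actually need and prove is that of the span of elementary tensors with entries in the $V_j$ inside the larger algebraic space. No gap.
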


\begin{definition}
Assume that for each $\alpha \in T_D \setminus \mathcal{L}(T_D)$ there
exists a norm $\|\cdot\|_{\alpha}$ defined on $\left._{a} \bigotimes_{\beta
\in S(\alpha)} V_{\beta_{\|\cdot\|_{\beta}}} \right..$ We will say that the
tensor product map $\bigotimes$ is $T_D$-continuous if the map 
\begin{equation*}
\bigotimes: 
\mathop{\mathchoice{\raise-0.22em\hbox{\huge
$\times$}} {\raise-0.05em\hbox{\Large $\times$}}{\hbox{\large
$\times$}}{\times}}_{\beta \in
S(\alpha)}(V_{\beta_{\|\cdot\|_{\beta}}},\|\cdot\|_{\beta}) \rightarrow
\left(\left._a \bigotimes_{\beta \in S(\alpha)}
V_{\beta_{\|\cdot\|_{\beta}}} \right., \|\cdot\|_{\alpha} \right)
\end{equation*}
is continuous for each $\alpha \in T_D \setminus \mathcal{L}(T_D).$
\end{definition}

The next result gives the conditions to have \eqref{desirable_property}.

\begin{theorem}
\label{ext_Banach} Assume that we have a representation $\{\mathbf{V}%
_{\alpha _{\Vert \cdot \Vert _{\alpha }}}\}_{\alpha \in T_{D}\setminus
\{D\}} $ in the topological tree-based format of the tensor Banach space $%
\mathbf{V}_{D_{\Vert \cdot \Vert _{D}}}=\left. _{\Vert \cdot \Vert
_{D}}\bigotimes_{\alpha \in S(D)}\mathbf{V}_{\alpha }\right. ,$ such that
for each $\alpha \in T_{D}\setminus \mathcal{L}(T_{D})$, the norm $\Vert
\cdot \Vert _{\alpha }$ is also defined on $\left. _{a}\bigotimes_{\beta \in
S(\alpha )}V_{\beta _{\Vert \cdot \Vert _{\beta }}}\right. $ and the tensor
product map $\bigotimes $ is $T_{D}$-continuous. Then 
\begin{equation*}
\left. _{\Vert \cdot \Vert _{\alpha }}\bigotimes_{\beta \in S(\alpha )}%
\mathbf{V}_{\beta _{\Vert \cdot \Vert _{\beta }}}\right. =\left. _{\Vert
\cdot \Vert _{\alpha }}\bigotimes_{\beta \in S(\alpha )}\mathbf{V}_{\beta
}\right. =\left. _{\Vert \cdot \Vert _{\alpha }}\bigotimes_{j\in \alpha
}V_{j}\right. ,
\end{equation*}%
holds for all $\alpha \in T_{D}\setminus \mathcal{L}(T_{D}).$
\end{theorem}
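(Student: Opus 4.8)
The plan is to prove the identity by induction on the depth of the tree $T_D$, reducing at each step to the two-factor statement already available in Lemma~\ref{lemma434}. The base case is the situation where every son of the root is a leaf, i.e. $S(D)=\mathcal{L}(T_D)$: in that case the only non-leaf vertex is $D$ itself, there is nothing below it, and the claim for $\alpha=D$ reads $\left._{\|\cdot\|_D}\bigotimes_{j\in D} V_{j_{\|\cdot\|_j}}\right. = \left._{\|\cdot\|_D}\bigotimes_{j\in D} V_{j}\right.$, which is precisely the conclusion of Lemma~\ref{lemma434} applied with the $d$ factors $(V_j,\|\cdot\|_j)$, $j\in D$, using the hypothesis that $\bigotimes$ is $T_D$-continuous (so that \eqref{(Tensorproduktabbildung1} holds for the root).

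For the inductive step, I would fix $\alpha \in T_D\setminus\mathcal{L}(T_D)$ and argue from the leaves upward. Concretely, process the vertices of $T_D$ in order of decreasing level. For a non-leaf $\alpha$ all of whose sons $\beta\in S(\alpha)$ are either leaves or vertices already treated, we have by the induction hypothesis (or trivially, if $\beta$ is a leaf) that $\mathbf{V}_{\beta_{\|\cdot\|_\beta}} = \left._{\|\cdot\|_\beta}\bigotimes_{j\in\beta} V_{j_{\|\cdot\|_j}}\right. = \left._{\|\cdot\|_\beta}\bigotimes_{j\in\beta} V_{j}\right.$, so in particular $\mathbf{V}_{\beta_{\|\cdot\|_\beta}}$ is the completion of $\mathbf{V}_\beta = \left._a\bigotimes_{j\in\beta}V_j\right.$ for the norm $\|\cdot\|_\beta$. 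Now apply Lemma~\ref{lemma434} with the index set $S(\alpha)$ in place of $\{1,\dots,d\}$ and with the normed spaces $(\mathbf{V}_{\beta_{\|\cdot\|_\beta}},\|\cdot\|_\beta)$, $\beta\in S(\alpha)$: the $T_D$-continuity hypothesis gives continuity of the tensor product map \eqref{(Tensorproduktabbildung1} at the vertex $\alpha$, and the hypothesis that $\|\cdot\|_\alpha$ is also defined on $\left._a\bigotimes_{\beta\in S(\alpha)} V_{\beta_{\|\cdot\|_\beta}}\right.$ guarantees the norm in question is the same object on the algebraic tensor product whether we view the factors as completed or not. The lemma then yields
\begin{equation*}
\left._{\|\cdot\|_\alpha}\bigotimes_{\beta\in S(\alpha)}\mathbf{V}_{\beta_{\|\cdot\|_\beta}}\right. = \left._{\|\cdot\|_\alpha}\bigotimes_{\beta\in S(\alpha)}\mathbf{V}_{\beta}\right. .
\end{equation*}
Combining this with the induced identifications $\mathbf{V}_{\beta_{\|\cdot\|_\beta}} = \left._{\|\cdot\|_\beta}\bigotimes_{j\in\beta}V_j\right.$ (and the standard associativity of algebraic tensor products $\bigotimes_{\beta\in S(\alpha)}\bigotimes_{j\in\beta}V_j = \bigotimes_{j\in\alpha}V_j$, since $S(\alpha)$ is a partition of $\alpha$) gives the full three-term equality $\left._{\|\cdot\|_\alpha}\bigotimes_{\beta\in S(\alpha)}\mathbf{V}_{\beta_{\|\cdot\|_\beta}}\right. = \left._{\|\cdot\|_\alpha}\bigotimes_{\beta\in S(\alpha)}\mathbf{V}_{\beta}\right. = \left._{\|\cdot\|_\alpha}\bigotimes_{j\in\alpha}V_j\right.$, completing the step for $\alpha$ and hence, by induction, for all $\alpha\in T_D\setminus\mathcal{L}(T_D)$.

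The main subtlety — the part I expect to require the most care — is the bookkeeping of which norm lives on which space, and checking that the hypotheses of Lemma~\ref{lemma434} are genuinely met at each vertex. The lemma demands that $\|\cdot\|_\alpha$ be a norm on the algebraic tensor product of the \emph{completed} factors $V_{\beta_{\|\cdot\|_\beta}}$ (this is exactly what the "$\|\cdot\|_\alpha$ is also defined on $\left._a\bigotimes_{\beta\in S(\alpha)} V_{\beta_{\|\cdot\|_\beta}}\right.$" hypothesis supplies) and that the corresponding tensor product map is continuous (supplied by $T_D$-continuity). One must also be careful that the norm $\|\cdot\|_\alpha$ appearing in the definition of the topological tree-based format — where the sons are the not-necessarily-completed spaces $\mathbf{V}_\beta$ — agrees, on the common dense subspace $\left._a\bigotimes_{\beta\in S(\alpha)}\mathbf{V}_\beta\right.$, with the norm used in Lemma~\ref{lemma434}; this is immediate since $\mathbf{V}_\beta \subset \mathbf{V}_{\beta_{\|\cdot\|_\beta}}$ and both algebraic tensor products coincide as sets once the inner completions are identified via the induction hypothesis. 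No topological obstacle arises beyond this; the argument is a clean induction once the two-factor ($d$-factor at a single vertex) case is isolated, which is precisely the content of Lemma~\ref{lemma434}.
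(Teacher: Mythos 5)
Your proposal is correct and follows essentially the same route as the paper: the entire content is the application of Lemma~\ref{lemma434} at each non-leaf vertex $\alpha$ with factors $(\mathbf{V}_{\beta_{\Vert\cdot\Vert_\beta}},\Vert\cdot\Vert_\beta)$, $\beta\in S(\alpha)$, the $T_D$-continuity supplying the continuity hypothesis and the algebraic identity $\mathbf{V}_\alpha=\left._a\bigotimes_{\beta\in S(\alpha)}\mathbf{V}_\beta\right.=\left._a\bigotimes_{j\in\alpha}V_j\right.$ giving the remaining equality. The leaves-to-root induction you add is harmless but not needed, since the definition of the topological tree-based format already identifies $\mathbf{V}_{\beta_{\Vert\cdot\Vert_\beta}}$ as the completion of $\mathbf{V}_\beta$ at every vertex, so each $\alpha$ can be treated independently, as the paper does.
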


\begin{proof}
From Lemma \ref{lemma434}, if the tensor product map 
\begin{equation*}
\bigotimes: 
\mathop{\mathchoice{\raise-0.22em\hbox{\huge $\times$}} {\raise-0.05em\hbox{\Large $\times$}}{\hbox{\large
$\times$}}{\times}}_{\beta \in S(\alpha)} (\mathbf{V}_{\beta_{\|\cdot\|_{%
\beta}}}, \|\cdot\|_{\beta}) \longrightarrow (\left._{a} \bigotimes_{\beta
\in S(\alpha)} \mathbf{V}_{\beta_{\|\cdot\|_{\beta}}}\right.,\|\cdot\|_{%
\alpha})
\end{equation*}
is continuous, then 
\begin{equation*}
\left._{\|\cdot\|_{\alpha}} \bigotimes_{\beta \in S(\alpha)} \mathbf{V}%
_{\beta_{\|\cdot\|_{\beta}}}\right.= \left._{\|\cdot\|_{\alpha}}
\bigotimes_{\beta \in S(\alpha)} V_{\beta}\right.,
\end{equation*}
holds. Since $\mathbf{V}_{\alpha} = \left._{a} \bigotimes_{\beta \in
S(\alpha)} \mathbf{V}_{\beta}\right. = \left._{a} \bigotimes_{j \in \alpha}
V_{j}\right.,$ the theorem follows. \qed
\end{proof}

\begin{example}
Assume that the tensor product maps 
\begin{equation*}
\bigotimes:(L^p(I_1),\|\cdot\|_{0,p;I_1}) \times (H^{N,p}(I_2)
\otimes_{\|\cdot\|_{23}} H^{N,p}(I_3), \|\cdot\|_{23}) \rightarrow (L^p(I_1)
\otimes_a (H^{N,p}(I_2) \otimes_{\|\cdot\|_{23}} H^{N,p}(I_3)),
\|\cdot\|_{123} )
\end{equation*}
and 
\begin{equation*}
\bigotimes:(H^{N,p}(I_2), \|\cdot\|_{N,p;I_2}) \times (H^{N,p}(I_3),
\|\cdot\|_{N,p;I_3}) \rightarrow (H^{N,p}(I_2) \otimes_{a} H^{N,p}(I_3),
\|\cdot\|_{23} )
\end{equation*}
are continuous. Then the trees of Figure \ref{figZp} and Figure \ref{figZp1}
are the same.
\end{example}

\section{On the best 
approximation in $\mathcal{FT}_{\le \mathfrak{r}}(\mathbf{V}_D)$}

Now we discuss about the best approximation problem 
in  $\mathcal{FT}_{\le \mathfrak{r}}(\mathbf{V}_D)$.
For this, we 
need a stronger condition than the $T_{D}$-continuity of the tensor product. Grothendieck 
\cite{Grothendiek1953} named the norm $\left\Vert \cdot
\right\Vert _{\vee }$ introduced below the \emph{injective norm}.

\begin{definition}
Let $V_{i}$ be a Banach space with norm $\left\Vert \cdot\right\Vert _{i}$
for $1\leq i\leq d.$ Then for $\mathbf{v}\in\mathbf{V}=\left.
_{a}\bigotimes_{j=1}^{d}V_{j}\right. $ define the norm $\left\Vert \cdot\right\Vert
_{\vee(V_1,\ldots,V_d)}$, called the injective norm, by%
\begin{equation}
\left\Vert \mathbf{v}\right\Vert _{\vee(V_1,\ldots,V_d)}:=\sup\left\{ \frac{%
\left\vert \left(
\varphi_{1}\otimes\varphi_{2}\otimes\ldots\otimes\varphi_{d}\right) (\mathbf{%
v})\right\vert }{\prod_{j=1}^{d}\Vert\varphi_{j}\Vert_{j}^{\ast}}%
:0\neq\varphi_{j}\in V_{j}^{\ast},1\leq j\leq d\right\} .
\label{(Norm ind*(V1,...,Vd)}
\end{equation}
\end{definition}

It is well known that the injective norm is a reasonable crossnorm (see
Lemma 1.6 in \cite{Light} and \eqref{(rcn a}-\eqref{(rcn b}). Further
properties are given by the next proposition (see Lemma 4.96 and 4.2.4 in 
\cite{Hackbusch}).

\begin{proposition}
\label{injective bounded below} Let $V_{i}$ be a Banach space with norm $%
\left\Vert \cdot\right\Vert _{i}$ for $1\leq i\leq d,$ and $\|\cdot\|$ be a
norm on $\mathbf{V}:= \left._a \bigotimes_{j=1}^d V_j\right..$ The following
statements hold.

\begin{itemize}
\item[(a)] For each $1 \le j \le d$ introduce the tensor Banach space $$%
\mathbf{X}_j:=\left._{\|\cdot\|_{\vee(V_1,\ldots,V_{j-1},V_{j+1},%
\ldots,V_d)}} \bigotimes_{k \neq j}V_k \right..$$ Then 
\begin{equation}
\|\cdot\|_{\vee(V_1,\ldots,V_d)} = \|\cdot\|_{\vee\left(V_j, \mathbf{X}_j
\right)}
\end{equation}
holds for $1 \le j \le d.$

\item[(b)] The injective norm is the weakest reasonable crossnorm on $%
\mathbf{V},$ i.e., if $\left\Vert \cdot\right\Vert $ is a reasonable
crossnorm on $\mathbf{V},$ then 
\begin{equation}
\left\Vert \cdot\right\Vert \;\gtrsim\ \left\Vert \cdot\right\Vert
_{\vee(V_1,\ldots,V_d)}.  \label{(Norm staerker als ind*}
\end{equation}

\item[(c)] For any norm $\left\Vert \cdot\right\Vert $ on $\mathbf{V}$
satisfying $\left\Vert \cdot\right\Vert
_{\vee(V_1,\ldots,V_d)}\lesssim\left\Vert \cdot\right\Vert ,$ the map (\ref%
{(Tensorproduktabbildung}) is continuous, and hence Fr\'echet differentiable.
\end{itemize}
\end{proposition}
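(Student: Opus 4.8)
The plan is to prove the three items separately; only~(a) requires genuine work. Item~(b) is a one-line duality estimate, and item~(c) follows from the crossnorm property of the injective norm together with the elementary fact that a bounded multilinear map is Fr\'echet differentiable. The step I expect to be the main obstacle is, in~(a), the handling of the dual norm $\|\cdot\|_{\mathbf{X}_j^*}$ of the auxiliary tensor Banach space $\mathbf{X}_j$: I would avoid any finite-dimensional reduction and instead use that the injective norm defining $\mathbf{X}_j$ is a \emph{reasonable} crossnorm, so that $\|\bigotimes_{k\neq j}\varphi_k\|_{\mathbf{X}_j^*}=\prod_{k\neq j}\|\varphi_k\|_k^*$ for all $\varphi_k\in V_k^*$.

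For~(b), I would fix $\mathbf{v}\in\mathbf{V}$ and $0\neq\varphi_j\in V_j^*$ for $1\le j\le d$, and use that $|(\varphi_1\otimes\cdots\otimes\varphi_d)(\mathbf{v})|\le\|\varphi_1\otimes\cdots\otimes\varphi_d\|^*\,\|\mathbf{v}\|$; since $\|\cdot\|$ is a reasonable crossnorm, $\|\varphi_1\otimes\cdots\otimes\varphi_d\|^*=\prod_{j=1}^d\|\varphi_j\|_j^*$, so dividing by this product and taking the supremum over all such tuples would give $\|\mathbf{v}\|_{\vee(V_1,\ldots,V_d)}\le\|\mathbf{v}\|$, that is, the asserted inequality, in fact with constant $1$.

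For~(a), I would establish two inequalities. For $\|\mathbf{v}\|_{\vee(V_1,\ldots,V_d)}\le\|\mathbf{v}\|_{\vee(V_j,\mathbf{X}_j)}$, I would restrict the supremum defining the right-hand side to the elementary functionals $\boldsymbol{\Phi}=\bigotimes_{k\neq j}\varphi_k$ on $\mathbf{X}_j$; since $\|\boldsymbol{\Phi}\|_{\mathbf{X}_j^*}=\prod_{k\neq j}\|\varphi_k\|_k^*$, the quotients $|(\varphi_j\otimes\boldsymbol{\Phi})(\mathbf{v})|/(\|\varphi_j\|_j^*\,\|\boldsymbol{\Phi}\|_{\mathbf{X}_j^*})$ are precisely those appearing in the definition of $\|\mathbf{v}\|_{\vee(V_1,\ldots,V_d)}$. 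For the reverse inequality, the key device is partial evaluation: given $0\neq\varphi_j\in V_j^*$ and $0\neq\boldsymbol{\Phi}\in\mathbf{X}_j^*$, I would set $\mathbf{w}\in\mathbf{X}_j$ to be the contraction of $\mathbf{v}$ with $\varphi_j$ along the $j$-th factor (so $\mathbf{w}=(\varphi_j\otimes\mathrm{id}_{\mathbf{X}_j})(\mathbf{v})$ under the inclusion $\mathbf{V}\subseteq V_j\otimes_a\mathbf{X}_j$), so that $(\varphi_j\otimes\boldsymbol{\Phi})(\mathbf{v})=\boldsymbol{\Phi}(\mathbf{w})$ and hence $|(\varphi_j\otimes\boldsymbol{\Phi})(\mathbf{v})|\le\|\boldsymbol{\Phi}\|_{\mathbf{X}_j^*}\,\|\mathbf{w}\|_{\mathbf{X}_j}$; then I would bound $\|\mathbf{w}\|_{\mathbf{X}_j}$ by testing $\mathbf{w}$ against elementary $\bigotimes_{k\neq j}\psi_k$ and using $(\bigotimes_{k\neq j}\psi_k)(\mathbf{w})=(\varphi_j\otimes\bigotimes_{k\neq j}\psi_k)(\mathbf{v})$, obtaining $\|\mathbf{w}\|_{\mathbf{X}_j}\le\|\varphi_j\|_j^*\,\|\mathbf{v}\|_{\vee(V_1,\ldots,V_d)}$. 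Combining these, dividing by $\|\varphi_j\|_j^*\,\|\boldsymbol{\Phi}\|_{\mathbf{X}_j^*}$, and taking the supremum over $\varphi_j$ and $\boldsymbol{\Phi}$ would give the claim. Iterating this identity would then yield the recursive description of the injective norm.

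For~(c), I would observe that, $\|\cdot\|_{\vee(V_1,\ldots,V_d)}$ being a crossnorm, $\|\bigotimes_{j=1}^d v_j\|_{\vee(V_1,\ldots,V_d)}=\prod_{j=1}^d\|v_j\|_j$; combined with the comparison between $\|\cdot\|$ and $\|\cdot\|_{\vee(V_1,\ldots,V_d)}$ in~(c), this gives $\|\bigotimes_{j=1}^d v_j\|\lesssim\prod_{j=1}^d\|v_j\|_j$, which by the equivalence recalled in Remark~\ref{tensor product continuity} is exactly continuity of the tensor product map~\eqref{(Tensorproduktabbildung}. Finally, I would note that a continuous $d$-linear map $T$ between normed spaces is $C^\infty$, in particular Fr\'echet differentiable: its differential at $(v_1,\ldots,v_d)$ is $(h_1,\ldots,h_d)\mapsto\sum_{i=1}^d T(v_1,\ldots,v_{i-1},h_i,v_{i+1},\ldots,v_d)$, the remainder being a finite sum of terms that are multilinear and of total degree $\ge 2$ in the increments, hence $o(\max_i\|h_i\|_i)$.
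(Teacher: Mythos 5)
Your treatment of (a) and (b) is correct. The paper itself gives no proof of this proposition (it points to Lemma~4.96 and \S4.2.4 of \cite{Hackbusch}), so there is nothing to compare line by line; but your duality estimate for (b) and your partial-evaluation argument for (a) -- contracting $\mathbf{v}$ with $\varphi_j$ to get $\mathbf{w}=(\varphi_j\otimes \mathrm{id})(\mathbf{v})\in\mathbf{X}_j$, bounding $\|\mathbf{w}\|_{\mathbf{X}_j}\le\|\varphi_j\|_j^*\,\|\mathbf{v}\|_{\vee(V_1,\ldots,V_d)}$ by testing against elementary functionals, and using that the injective norm is a reasonable crossnorm to identify $\|\bigotimes_{k\neq j}\varphi_k\|_{\mathbf{X}_j^*}$ with $\prod_{k\neq j}\|\varphi_k\|_k^*$ -- are exactly the standard route behind that reference, and they give the stated equalities with constant $1$.

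Part (c), however, contains a genuine gap: you use the comparison in the wrong direction. The hypothesis is $\|\cdot\|_{\vee(V_1,\ldots,V_d)}\lesssim\|\cdot\|$, i.e.\ $\|\mathbf{x}\|_{\vee}\le C\|\mathbf{x}\|$ for all $\mathbf{x}\in\mathbf{V}$. Applied to an elementary tensor together with the crossnorm property of the injective norm, this yields
\begin{equation*}
\prod_{j=1}^d\|v_j\|_j=\Big\|\bigotimes_{j=1}^d v_j\Big\|_{\vee(V_1,\ldots,V_d)}\le C\,\Big\|\bigotimes_{j=1}^d v_j\Big\|,
\end{equation*}
which is a \emph{lower} bound on $\|\bigotimes_j v_j\|$, whereas continuity of the map \eqref{(Tensorproduktabbildung} is, by Remark~\ref{tensor product continuity}, equivalent to the \emph{upper} bound $\|\bigotimes_j v_j\|\lesssim\prod_j\|v_j\|_j$. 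So the sentence ``combined with the comparison \ldots this gives $\|\bigotimes v_j\|\lesssim\prod\|v_j\|_j$'' does not follow; domination of the injective norm alone does not force the tensor product map to be bounded (one can rescale a norm on a Hamel basis complement to dominate $\|\cdot\|_{\vee}$ while being arbitrarily large on elementary tensors). What that hypothesis \emph{does} buy is continuity of the partial evaluations $\mathrm{id}\otimes\boldsymbol{\varphi}$, as in Lemma~\ref{extension_Umin}. To repair the argument you must invoke an upper bound on $\|\cdot\|$ over elementary tensors -- e.g.\ that $\|\cdot\|$ is itself a (reasonable) crossnorm, in which case Remark~\ref{tensor product continuity} gives continuity directly and the comparison with $\|\cdot\|_{\vee}$ plays no role -- and you should flag that the statement of (c) as written appears to need such an additional assumption. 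Your final observation that a continuous multilinear map is Fr\'echet differentiable is fine once continuity is actually in hand.
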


In Corollary 4.4 in \cite{FALHACK} the following result, which is re-stated
here using the notations of the present paper, is proved as a consequence of
a similar result showed for tensors in Tucker format with bounded rank.

\begin{theorem}
\label{classical_best_approx} Let $\mathbf{V}_{D}=\left.
_{a}\bigotimes_{j\in D}V_{j}\right. $ and let $\{\mathbf{V}_{\alpha
_{j}\left. _{\Vert \cdot \Vert _{\alpha _{j}}}\right. }:2\leq j\leq d\}\cup
\{V_{j_{\Vert \cdot \Vert _{j}}}:1\leq j\leq d\}$ for $d\geq 3,$ be a
representation of a reflexive Banach tensor space $\mathbf{V}_{D_{\Vert
\cdot \Vert _{D}}}=\left. _{\Vert \cdot \Vert _{D}}\bigotimes_{j\in
D}V_{j}\right. ,$ in topological tree-based format such that

\begin{enumerate}
\item[(a)] $\|\cdot\|_D \gtrsim
\|\cdot\|_{\vee(V_{1_{\|\cdot\|_j}},\ldots,V_{d_{\|\cdot\|_d}})},$

\item[(b)] $\mathbf{V}_{\alpha_d} = V_{d-1} \otimes_a V_d,$ and $\mathbf{V}%
_{\alpha_{j}} = V_{j-1} \otimes_a \mathbf{V}_{\alpha_{j+1}},$ for $2 \le j
\le d-1,$ and

\item[(c)] $\|\cdot\|_{\alpha_j} :=
\|\cdot\|_{\vee(V_{\left.j-1\right._{\|\cdot\|_{j-1}}},\ldots,V_{d_{\|\cdot%
\|_d}})}$ for $2 \le j \le d.$
\end{enumerate}

Then for each $\mathbf{v} \in \mathbf{V}_{D_{\|\cdot\|_D}}$ there exists $%
\mathbf{u}_{best} \in \mathcal{FT}_{\le \mathfrak{r}}(\mathbf{V}_D)$ such
that 
\begin{align*}
\|\mathbf{v}-\mathbf{u}_{best}\|_D = \min_{\mathbf{u} \in \mathcal{FT}_{\le 
\mathfrak{r}}(\mathbf{V}_D)}\|\mathbf{v}-\mathbf{u}\|_D.
\end{align*}
\end{theorem}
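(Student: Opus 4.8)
The plan is to prove Theorem~\ref{classical_best_approx} by the direct method of the calculus of variations, the two ingredients being the reflexivity of $\mathbf{V}_{D_{\Vert\cdot\Vert_D}}$ (a hypothesis) and the fact that $\mathcal{FT}_{\le\mathfrak{r}}(\mathbf{V}_D)$ is \emph{weakly sequentially closed} in $\mathbf{V}_{D_{\Vert\cdot\Vert_D}}$. Granting the latter, the remainder is routine: since $\mathbf{0}\in\mathcal{FT}_{\le\mathfrak{r}}(\mathbf{V}_D)$ (all its minimal subspaces being $\{\mathbf{0}\}$), the infimum $\delta:=\inf_{\mathbf{u}\in\mathcal{FT}_{\le\mathfrak{r}}(\mathbf{V}_D)}\Vert\mathbf{v}-\mathbf{u}\Vert_D$ is finite; a minimizing sequence $(\mathbf{u}_n)$ then satisfies $\Vert\mathbf{u}_n\Vert_D\le\Vert\mathbf{v}\Vert_D+\Vert\mathbf{v}-\mathbf{u}_n\Vert_D$ and is therefore bounded, so by reflexivity a subsequence converges weakly, $\mathbf{u}_n\rightharpoonup\mathbf{u}_{best}$. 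Weak sequential closedness gives $\mathbf{u}_{best}\in\mathcal{FT}_{\le\mathfrak{r}}(\mathbf{V}_D)$, and since $\mathbf{u}\mapsto\Vert\mathbf{v}-\mathbf{u}\Vert_D$ is convex and strongly continuous, hence weakly lower semicontinuous, $\Vert\mathbf{v}-\mathbf{u}_{best}\Vert_D\le\liminf_n\Vert\mathbf{v}-\mathbf{u}_n\Vert_D=\delta$, i.e.\ $\mathbf{u}_{best}$ is a best approximation.

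The real work is the weak sequential closedness, and the plan is to rephrase the rank bounds \eqref{(Hr} as the vanishing of a family of determinants built from \emph{continuous} linear functionals. Fix $\alpha\in T_D$ with $\alpha\neq D$ and put $r=r_\alpha$ (the constraint at the root, $\dim U_D^{\min}(\mathbf{w})\le 1$, being automatic). By \cite[Theorem 2.17]{FALHACK} (as used in the proof of Proposition~\ref{(Ualpha in Tensor Uj coro}), $U_\alpha^{\min}(\mathbf{w})$ is spanned by the partial contractions $(id_\alpha\otimes\boldsymbol{\varphi}^{(\alpha^c)})(\mathbf{w})$; since the elementary functionals $\bigotimes_{j\in\alpha^c}\varphi^{(j)}$ with $\varphi^{(j)}\in V_j^\ast$ are total on $\mathbf{V}_{\alpha^c}$, and likewise $\bigotimes_{j\in\alpha}\psi^{(j)}$ on $\mathbf{V}_\alpha$, it is enough to use such elementary functionals. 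Consequently $\dim U_\alpha^{\min}(\mathbf{w})\ge r+1$ if and only if there exist elementary functionals $\boldsymbol{\varphi}^{(\alpha^c)}_0,\dots,\boldsymbol{\varphi}^{(\alpha^c)}_r$ and $\boldsymbol{\psi}^{(\alpha)}_0,\dots,\boldsymbol{\psi}^{(\alpha)}_r$ with
\[
\det\big[(\boldsymbol{\psi}_i^{(\alpha)}\otimes\boldsymbol{\varphi}_k^{(\alpha^c)})(\mathbf{w})\big]_{i,k=0}^{r}\neq 0 ,
\]
so that $\mathcal{FT}_{\le\mathfrak{r}}(\mathbf{V}_D)$ is exactly the common zero set, over all $\alpha\in T_D$ and all such finite families, of the maps $\mathbf{w}\mapsto\det\big[(\boldsymbol{\psi}_i^{(\alpha)}\otimes\boldsymbol{\varphi}_k^{(\alpha^c)})(\mathbf{w})\big]$.

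Now hypothesis (a) enters, and it is exactly what makes these determinant maps weakly continuous. For $\eta^{(j)}\in V_j^\ast$ the functional $\Phi=\bigotimes_{j\in D}\eta^{(j)}$ obeys $|\Phi(\mathbf{w})|\le\big(\prod_j\Vert\eta^{(j)}\Vert_j^\ast\big)\,\Vert\mathbf{w}\Vert_{\vee(V_1,\dots,V_d)}$ directly from the definition \eqref{(Norm ind*(V1,...,Vd)} of the injective norm, so by (a) it is $\Vert\cdot\Vert_D$-continuous, i.e.\ $\Phi\in(\mathbf{V}_{D_{\Vert\cdot\Vert_D}})^\ast$; each entry $(\boldsymbol{\psi}_i^{(\alpha)}\otimes\boldsymbol{\varphi}_k^{(\alpha^c)})(\cdot)$ of the matrices above is of this form. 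Hence, if $\mathbf{u}_n\rightharpoonup\mathbf{u}_{best}$, each entry converges and, $\det$ being continuous, every determinant that vanishes on all $\mathbf{u}_n$ also vanishes on $\mathbf{u}_{best}$. By the previous paragraph this gives $\dim U_\alpha^{\min}(\mathbf{u}_{best})\le r_\alpha$ for all $\alpha\in T_D$; in particular all minimal subspaces of $\mathbf{u}_{best}$ are finite-dimensional, so, combining the defining property of the minimal subspaces at the root with Proposition~\ref{inclusin_Umin} applied recursively down to the leaves, $\mathbf{u}_{best}$ lies in the finite-dimensional algebraic tensor product of its leaf minimal subspaces, whence $\mathbf{u}_{best}\in\mathcal{FT}_{\le\mathfrak{r}}(\mathbf{V}_D)$. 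This establishes the weak sequential closedness used above and completes the proof.

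The main obstacle is precisely this weak-closedness step. The only substantive use of the assumptions is hypothesis (a): it makes the rank-detecting elementary functionals $\Vert\cdot\Vert_D$-continuous and thereby renders the rank constraints stable under weak convergence; hypotheses (b)--(c) merely fix the particular (linear, Tensor-Train-type) tree and equip every interior vertex with the injective norm, so that $\{\mathbf{V}_{\alpha_{\Vert\cdot\Vert_\alpha}}\}$ is a bona fide topological tree-based representation and the minimal subspaces $U_\alpha^{\min}(\mathbf{u}_{best})$ of limit tensors are taken in well-defined Banach spaces --- here Proposition~\ref{injective bounded below}(a)--(b) guarantees that the vertex norms dominate the relevant injective norms and that the partial contractions are bounded. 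The delicate point, and the reason for phrasing the constraints through determinants of \emph{fixed} functionals rather than through the parametrisation \eqref{TBRT1}--\eqref{TBRT2}, is that the bases $\mathbf{u}^{(\mu)}_{i_\mu}$ and core tensors $C^{(\mu)}$ of a minimizing sequence need not behave well in the limit --- the bases may degenerate, their Gram determinants tending to $0$ --- so a direct compactness argument on the representation is problematic, whereas testing $\mathbf{u}_n$ against a fixed finite family of continuous functionals causes no such trouble.
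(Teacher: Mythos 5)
Your overall architecture is the right one and is essentially the one the paper relies on: the paper does not reprove Theorem~\ref{classical_best_approx} but imports it from \cite[Corollary 4.4]{FALHACK}, and both that proof and the paper's generalisation (Theorem~\ref{best_approximation_FT}) run exactly as you propose --- direct method in a reflexive space plus weak closedness of $\mathcal{FT}_{\le\mathfrak{r}}(\mathbf{V}_D)$, the latter obtained from lower semicontinuity of $\alpha\mapsto\dim U_\alpha^{\min}$ along weakly convergent sequences, which in turn rests on hypothesis (a) making the elementary functionals $\bigotimes_{j}\eta^{(j)}$ continuous for $\Vert\cdot\Vert_D$. Your determinant reformulation of the rank bounds is a legitimate and slightly more explicit packaging of that semicontinuity.

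There is, however, one genuine gap, and it sits at the step you dispatch in half a sentence. From the vanishing of all the determinants you correctly get $\dim U_\alpha^{\min}(\mathbf{u}_{best})\le r_\alpha$ for the \emph{extended} minimal subspaces of the weak limit, which a priori lives only in the completion $\mathbf{V}_{D_{\Vert\cdot\Vert_D}}$. But $\mathcal{FT}_{\le\mathfrak{r}}(\mathbf{V}_D)$ is by definition \eqref{(Hr} a subset of the \emph{algebraic} space $\mathbf{V}_D$, so you must still show $\mathbf{u}_{best}\in\mathbf{V}_D$. Your justification --- ``the defining property of the minimal subspaces at the root'' together with Proposition~\ref{inclusin_Umin} applied recursively --- is circular: that defining property ($\mathbf{v}\in U_\alpha^{\min}(\mathbf{v})\otimes_a\mathbf{V}_{\alpha^c}$) and Proposition~\ref{inclusin_Umin} are stated and proved only for $\mathbf{v}\in\mathbf{V}_D$, i.e.\ precisely under the hypothesis you are trying to establish. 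What you actually need is: if $\mathbf{v}\in\mathbf{V}_{D_{\Vert\cdot\Vert_D}}$ with $\Vert\cdot\Vert_D\gtrsim\Vert\cdot\Vert_\vee$ has $\dim U_j^{\min}(\mathbf{v})<\infty$ for all $j$, then $\mathbf{v}\in\left._a\bigotimes_j U_j^{\min}(\mathbf{v})\right.$. This is a nontrivial theorem of \cite{FALHACK} (it is where the reasonable-crossnorm properties of the injective norm and continuous projections onto the finite-dimensional minimal subspaces are really used); without it your argument only places $\mathbf{u}_{best}$ in the weak closure of $\mathcal{FT}_{\le\mathfrak{r}}(\mathbf{V}_D)$ inside the completion, not in the set itself. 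A secondary, fixable point: to pass from ``all $(r_\alpha+1)\times(r_\alpha+1)$ determinants built from \emph{elementary} functionals vanish'' to ``$\dim U_\alpha^{\min}(\mathbf{u}_{best})\le r_\alpha$'' you should justify that elementary functionals are total on $U_\alpha^{\min}(\mathbf{u}_{best})\subset\mathbf{V}_{\alpha_{\Vert\cdot\Vert_\alpha}}$; this follows from hypothesis (c) and the definition of the injective norm, but it is not automatic for an arbitrary crossnorm and deserves a line.
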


It seems clear that tensor Banach spaces as we show in Example \ref{figZp1}
are not included in this framework. So a natural question is to ask if for a
representation in the topological tree-based format of a reflexive Banach
space the statement of Theorem \ref%
{classical_best_approx} is also true. To prove this, we will reformulate
some of the results given in \cite{FALHACK}. In the aforementioned paper,
the milestone to prove the existence of a best approximation is the
extension of the definition of minimal subspaces for tensors $\mathbf{v}\in 
\mathbf{V}_{D_{\Vert \cdot \Vert _{D}}}\setminus \mathbf{V}_{D}.$ To do this
the authors use a result similar  to the following lemma (see Lemma 3.8 in 
\cite{FALHACK}).

\begin{lemma}
\label{extension_Umin} Let $V_{j_{\|\cdot\|_j}}$ be a Banach space for $j\in
D,$ where $D$ is a finite index set, and $\alpha_1,\ldots,\alpha_m \subset
2^D \setminus \{D,\emptyset\},$ be such that $\alpha_i \cap \alpha_j =
\emptyset$ for all $i \neq j$ and $D = \bigcup_{i=1}^m \alpha_i.$ Assume
that if $\#\alpha_i \ge 2$ for some $1 \le i \le m,$ then $\mathbf{V}%
_{\left.\alpha_i\right._{\|\cdot\|_{\alpha_i}}}$ is a tensor Banach space.
Consider the tensor space 
\begin{equation*}
\mathbf{V}_D:= \left._a \bigotimes_{i=1}^m \mathbf{V}_{\left.\alpha_i%
\right._{\|\cdot\|_{\alpha_i}}} \right.
\end{equation*}
endowed with the injective norm $\|\cdot\|_{\vee(\mathbf{V}%
_{\left.\alpha_1\right._{\|\cdot\|_{\alpha_1}}},\ldots,\mathbf{V}%
_{\left.\alpha_m\right._{\|\cdot\|_{\alpha_m}}})}.$ Fix $1 \le k \le m,$
then given $\boldsymbol{\varphi}_{[\alpha_k]}\in \left._a \bigotimes_{i \neq
k} \mathbf{V}_{\left.\alpha_i\right._{\|\cdot\|_{\alpha_i}}}^*\right.$ the
map $id_{\alpha_k} \otimes \boldsymbol{\varphi}_{[\alpha_k]}$ belongs to $%
\mathcal{L}\left(\mathbf{V}_D, \mathbf{V}_{\left.\alpha_k\right._{\|\cdot%
\|_{\alpha_k}}} \right).$ Moreover, $\overline{id_{\alpha_k} \otimes 
\boldsymbol{\varphi}_{[\alpha_k]}} \in \mathcal{L}(\overline{\mathbf{V}_D}%
^{\|\cdot\|,},\mathbf{V}_{\left.\alpha_k\right._{\|\cdot\|_{\alpha_k}}})$
for any norm satisfying 
\begin{equation*}
\|\cdot\| \gtrsim \|\cdot\|_{\vee(\mathbf{V}_{\left.\alpha_1\right._{\|\cdot%
\|_{\alpha_1}}},\ldots,\mathbf{V}_{\left.\alpha_m\right._{\|\cdot\|_{%
\alpha_m}}})}.
\end{equation*}
\end{lemma}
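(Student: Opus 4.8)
The plan is to reduce the lemma to the finite-dimensional-image behaviour of the contraction map together with the continuity estimate supplied by the injective norm, proceeding in three stages: first establishing the *algebraic* statement that $id_{\alpha_k} \otimes \boldsymbol{\varphi}_{[\alpha_k]}$ maps $\mathbf{V}_D$ into $\mathbf{V}_{\alpha_{k\,\|\cdot\|_{\alpha_k}}}$, then the *continuity* of this map with respect to $\|\cdot\|_{\vee}$, and finally the extension to the completion $\overline{\mathbf{V}_D}^{\|\cdot\|}$ for any stronger norm.

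\emph{Step 1 (algebraic well-definedness).} Write a generic $\mathbf{v}\in\mathbf{V}_D$ as a finite sum $\mathbf{v}=\sum_{\ell=1}^{n}\mathbf{w}_\ell^{(k)}\otimes\bigotimes_{i\neq k}\mathbf{w}_\ell^{(i)}$ with $\mathbf{w}_\ell^{(i)}\in\mathbf{V}_{\alpha_{i\,\|\cdot\|_{\alpha_i}}}$; applying $\boldsymbol{\varphi}_{[\alpha_k]}=\bigotimes_{i\neq k}\varphi^{(i)}$ (by linearity it suffices to treat elementary $\boldsymbol{\varphi}_{[\alpha_k]}$) gives $(id_{\alpha_k}\otimes\boldsymbol{\varphi}_{[\alpha_k]})(\mathbf{v})=\sum_{\ell}\big(\prod_{i\neq k}\varphi^{(i)}(\mathbf{w}_\ell^{(i)})\big)\,\mathbf{w}_\ell^{(k)}$, a finite sum of elements of $\mathbf{V}_{\alpha_{k\,\|\cdot\|_{\alpha_k}}}$, hence an element of that space. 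This shows the map is a well-defined linear map into $\mathbf{V}_{\alpha_{k\,\|\cdot\|_{\alpha_k}}}$.

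\emph{Step 2 (continuity under the injective norm).} Here I would invoke the recursive/associativity property of the injective norm recorded in Proposition~\ref{injective bounded below}(a), which lets us regroup $\mathbf{V}_D$ as $\mathbf{V}_{\alpha_{k\,\|\cdot\|_{\alpha_k}}}\otimes_a\mathbf{X}_k$ with $\mathbf{X}_k=\left._{\|\cdot\|_{\vee(\mathbf{V}_{\alpha_{i}};\,i\neq k)}}\bigotimes_{i\neq k}\mathbf{V}_{\alpha_{i\,\|\cdot\|_{\alpha_i}}}\right.$ and identify $\|\cdot\|_{\vee(\mathbf{V}_{\alpha_1},\ldots,\mathbf{V}_{\alpha_m})}$ with $\|\cdot\|_{\vee(\mathbf{V}_{\alpha_{k\,\|\cdot\|_{\alpha_k}}},\mathbf{X}_k)}$. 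Applying the very definition \eqref{(Norm ind*(V1,...,Vd)} of the injective norm on this two-factor grouping, for any $\psi\in\mathbf{V}_{\alpha_{k\,\|\cdot\|_{\alpha_k}}}^{*}$ one has $|\psi((id_{\alpha_k}\otimes\boldsymbol{\varphi}_{[\alpha_k]})(\mathbf{v}))| = |(\psi\otimes\boldsymbol{\varphi}_{[\alpha_k]})(\mathbf{v})| \le \|\mathbf{v}\|_{\vee}\,\|\psi\|_{\alpha_k}^{*}\,\|\boldsymbol{\varphi}_{[\alpha_k]}\|^{*}$, where $\|\boldsymbol{\varphi}_{[\alpha_k]}\|^{*}$ is finite because the injective norm is a reasonable crossnorm (Proposition~\ref{injective bounded below}(b) and the remarks preceding it), so on elementary tensors $\|\boldsymbol{\varphi}_{[\alpha_k]}\|^{*}=\prod_{i\neq k}\|\varphi^{(i)}\|_{\alpha_i}^{*}$ and on general $\boldsymbol{\varphi}_{[\alpha_k]}$ it is a finite number. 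Taking the supremum over $\|\psi\|_{\alpha_k}^{*}\le 1$ and using reflexivity/the norming property of the dual gives $\|(id_{\alpha_k}\otimes\boldsymbol{\varphi}_{[\alpha_k]})(\mathbf{v})\|_{\alpha_k}\le \|\boldsymbol{\varphi}_{[\alpha_k]}\|^{*}\,\|\mathbf{v}\|_{\vee}$, i.e. $id_{\alpha_k}\otimes\boldsymbol{\varphi}_{[\alpha_k]}\in\mathcal{L}(\mathbf{V}_D,\mathbf{V}_{\alpha_{k\,\|\cdot\|_{\alpha_k}}})$ with the stated operator bound.

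\emph{Step 3 (extension to the completion).} For any norm $\|\cdot\|\gtrsim\|\cdot\|_{\vee}$ the estimate from Step~2 yields $\|(id_{\alpha_k}\otimes\boldsymbol{\varphi}_{[\alpha_k]})(\mathbf{v})\|_{\alpha_k}\lesssim\|\boldsymbol{\varphi}_{[\alpha_k]}\|^{*}\,\|\mathbf{v}\|$, so the bounded linear map is uniformly continuous on the dense subspace $\mathbf{V}_D\subset\overline{\mathbf{V}_D}^{\|\cdot\|}$ and, since $\mathbf{V}_{\alpha_{k\,\|\cdot\|_{\alpha_k}}}$ is complete (it is a tensor Banach space when $\#\alpha_k\ge 2$, and a Banach space by hypothesis when $\#\alpha_k=1$), it extends uniquely to $\overline{id_{\alpha_k}\otimes\boldsymbol{\varphi}_{[\alpha_k]}}\in\mathcal{L}(\overline{\mathbf{V}_D}^{\|\cdot\|},\mathbf{V}_{\alpha_{k\,\|\cdot\|_{\alpha_k}}})$, with the same norm bound. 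The main obstacle I anticipate is Step~2: one must be careful that the regrouping identity for the injective norm in Proposition~\ref{injective bounded below}(a), stated there for factors $V_k$, applies verbatim when the factors are themselves tensor Banach spaces $\mathbf{V}_{\alpha_{i\,\|\cdot\|_{\alpha_i}}}$, and that $\|\boldsymbol{\varphi}_{[\alpha_k]}\|^{*}$ computed with respect to the injective norm on $\mathbf{X}_k$ is genuinely finite for an arbitrary (not merely elementary) contraction functional — this is where the reasonable-crossnorm property of $\|\cdot\|_{\vee}$ does the real work, and it is essentially Lemma~4.34/the cited material from \cite{Hackbusch} that is being repackaged.
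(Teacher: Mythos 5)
Your argument is correct and follows essentially the same route as the source the paper relies on: the paper gives no proof of this lemma itself but refers to Lemma~3.8 of \cite{FALHACK}, whose proof is exactly your Step~2 estimate $|(\psi\otimes\boldsymbol{\varphi}_{[\alpha_k]})(\mathbf{v})|\le\|\mathbf{v}\|_{\vee}\,\|\psi\|^{\ast}\,\|\boldsymbol{\varphi}_{[\alpha_k]}\|^{\ast}$ combined with Hahn--Banach and the unique continuous extension from the dense subspace. The only cosmetic remark is that no reflexivity is needed in Step~2 (the norming property of the dual, i.e.\ Hahn--Banach, suffices), and for a non-elementary $\boldsymbol{\varphi}_{[\alpha_k]}$ the finiteness of the operator bound follows simply from the triangle inequality over a finite elementary decomposition, as you anticipate.
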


Let $\{\mathbf{V}_{\alpha _{\Vert \cdot \Vert _{\alpha }}}\}_{\alpha \in
T_{D}\setminus \{D\}}$ be a representation of the Banach tensor space $%
\mathbf{V}_{D_{\Vert \cdot \Vert _{D}}}=\left. _{\Vert \cdot \Vert
_{D}}\bigotimes_{j\in D}V_{j}\right. ,$ in the topological tree-based format
and assume that the tensor product map $\bigotimes$ is $T_D$-continuous.
From Theorem \ref{ext_Banach}, we may assume that we have a tensor Banach
space 
\begin{equation*}
\mathbf{V}_{\alpha_{\|\cdot\|_{\alpha}}} = \left._{\|\cdot\|_{\alpha}}
\bigotimes_{\beta\in S(\alpha)} V_{\beta_{\|\cdot\|_{\beta}}} \right.
\end{equation*}
for each $\alpha \in T_D\setminus \mathcal{L}(T_D),$ and a Banach space $%
V_{j_{\|\cdot\|_j}}$ for $j \in \mathcal{L}(T_D).$ Let $\alpha \in
T_D\setminus \mathcal{L}(T_D).$ To simplify the notation we write for $A,B
\subset S(\alpha) $ 
\begin{equation*}
\|\cdot\|_{\vee(A)}:= \|\cdot\|_{\vee(\{\mathbf{V}_{\delta_{\|\cdot\|_{%
\delta}}}:\delta \in A\})},
\end{equation*}
and 
\begin{equation*}
\|\cdot\|_{\vee(A,\vee(B))}:= \|\cdot\|_{\vee(\{\mathbf{V}%
_{\delta_{\|\cdot\|_{\delta}}}:\delta \in A\}, \mathbf{X}_B)}
\end{equation*}
where 
\begin{equation*}
\mathbf{X}_B:= \left._{\|\cdot\|_{\vee(B)}} \bigotimes_{\beta \in B} \mathbf{%
V}_{\beta_{\|\cdot\|_{\beta}}}\right..
\end{equation*}
From Proposition~\ref{injective bounded below}(a), we can write 
\begin{align*}
\|\cdot\|_{\vee(S(\alpha))} = \|\cdot\|_{\vee(\beta,\vee(S(\alpha)\setminus
\beta))}  \label{tree_injective_norm}
\end{align*}
for each $\beta \in S(\alpha).$ From now on, we assume that 
\begin{align}
\|\cdot\|_{\alpha} \gtrsim \|\cdot\|_{\vee(S(\alpha))} \text{ for each }
\alpha \in T_D\setminus \mathcal{L}(T_D),
\end{align}
holds. Recall that Proposition~\ref{injective bounded below}(c) implies that
the tensor product map $\bigotimes$ is $T_D$-continuous. Since $%
\|\cdot\|_{\alpha} \gtrsim \|\cdot\|_{\vee(\beta,\vee(S(\alpha)\setminus
\beta))}$ holds for each $\beta\in S(\alpha)$, the tensor product map 
\begin{align*}
\bigotimes:(\mathbf{V}_{\beta_{\|\cdot\|_{\beta}}},\|\cdot\|_{\beta}) \times
\left( \left._{\|\cdot\|_{\vee(S(\alpha)\setminus \beta)}}
\bigotimes_{\delta \in S(\alpha)\setminus \{\beta\}}\mathbf{V}_{\delta_{\|\cdot%
\|_{\delta}}} \right. , \|\cdot\|_{\vee(S(\alpha)\setminus \beta)}\right)
\rightarrow (\mathbf{V}_{\alpha_{\|\cdot\|_{\alpha}}},\|\cdot\|_{\alpha})
\end{align*}
is also continuous for each $\beta \in S(\alpha).$ Moreover, by Theorem \ref%
{ext_Banach}, 
\begin{equation*}
\mathbf{V}_{\alpha _{\Vert \cdot \Vert _{\alpha }}}=\left. _{\Vert \cdot
\Vert _{\alpha }}\bigotimes_{\beta \in S(\alpha )}V_{\beta _{\Vert \cdot
\Vert _{\beta }}}\right. =\left. _{\Vert \cdot \Vert _{\alpha
}}\bigotimes_{\beta \in S(\alpha )}V_{\beta }\right. =\left. _{\Vert \cdot
\Vert _{\alpha }}\bigotimes_{j\in \alpha }V_{j}\right. ,
\end{equation*}%
holds for each $\alpha \in T_{D}\setminus \mathcal{L}(T_{D}).$ Observe, that 
$\mathbf{V}_{\alpha _{\Vert \cdot \Vert _{\alpha }}}^{\ast }\subset \mathbf{V%
}_{\alpha }^{\ast }$ for all $\alpha \in S(D).$ Take $\mathbf{V}_{D}=\left.
_{a}\bigotimes_{j\in D}V_{j}\right. .$ Since $\Vert \cdot \Vert _{D}\gtrsim
\Vert \cdot \Vert _{\vee (S(D ))},$ from Lemma \ref{extension_Umin} and
Proposition \ref{(Ualpha in Tensor Uj coro}, we can extend for $\mathbf{v}%
\in \mathbf{V}_{D_{\Vert \cdot \Vert _{D}}}\setminus \mathbf{V}_{D},$ the
definition of minimal subspace for each $\alpha \in S(D)$ as 
\begin{equation*}
U_{\alpha }^{\min }(\mathbf{v}):=\left\{ \overline{(id_{\alpha }\otimes 
\boldsymbol{\varphi }_{[\alpha ]})}(\mathbf{v}):\boldsymbol{\varphi }%
_{[\alpha ]}\in \left. _{a}\bigotimes_{\substack{ \beta \in S(D) \setminus
\{\alpha \}}}\mathbf{V}_{\beta }^{\ast }\right. \right\} .
\end{equation*}%
Observe that $\overline{(id_{\alpha }\otimes \boldsymbol{\varphi }_{[\alpha
]})}\in \mathcal{L}(\mathbf{V}_{D_{\Vert \cdot \Vert _{D}}},\mathbf{V}%
_{\alpha _{\Vert \cdot \Vert _{\alpha }}}).$ Recall that if $\mathbf{v}\in 
\mathbf{V}_{D}$ and $\alpha \notin \mathcal{L}(T_{D}),$ from Proposition \ref%
{inclusin_Umin}, we have $U_{\alpha }^{\min }(\mathbf{v})\subset \left.
_{a}\bigotimes_{\beta \in S(\alpha )}U_{\beta }^{\min }(\mathbf{v})\right.
\subset \left. _{a}\bigotimes_{\beta \in S(\alpha )}\mathbf{V}_{\beta
}\right. .$ Moreover, by Proposition \ref{(Ualpha in Tensor Uj coro}, for 
$\beta \in S(\alpha )$ we have 
\begin{align*}
U_{\beta }^{\min }(\mathbf{v})& = \mathrm{span}\, \left\{ (id_{\beta
}\otimes \boldsymbol{\varphi }_{[\beta ]})(\mathbf{v}_{\alpha }): \mathbf{v}%
_{\alpha}\in U_{\alpha }^{\min }(\mathbf{v}) \text{ and } \boldsymbol{%
\varphi }_{[\beta ]}\in \left._{a}\bigotimes_{\substack{ \delta\in
S(\alpha)\setminus \{\beta\}}} \mathbf{V}_{\delta }^{\ast } \right. \right\}
\\
& =\mathrm{span}\, \left\{ (id_{\beta }\otimes \boldsymbol{\varphi }%
_{[\beta]})\circ (id_{\alpha }\otimes \boldsymbol{\varphi }_{[\alpha ]})(%
\mathbf{v}):\boldsymbol{\varphi }_{[\alpha ]}\in \left. _{a}\bigotimes 
_{\substack{ \mu \in S(D) \setminus \{\alpha\} }}\mathbf{V}_{\mu }^{\ast
}\right. \text{ and }\boldsymbol{\varphi }_{[\beta ]}\in \left.
_{a}\bigotimes_{\substack{ \delta\in S(\alpha) \setminus \{\beta\}}} \mathbf{%
V}_{\delta }^{\ast }\right. \right\} .
\end{align*}%
Thus, $(id_{\alpha }\otimes \boldsymbol{\varphi }_{[\alpha ]})(\mathbf{v}%
)\in U_{\alpha }^{\min }(\mathbf{v})\subset \mathbf{V}_{\alpha }\subset 
\mathbf{V}_{\alpha _{\Vert \cdot \Vert _{\alpha }}},$ and hence 
\begin{equation*}
(id_{\beta }\otimes \boldsymbol{\varphi }_{[\beta ]})\circ (id_{\alpha
}\otimes \boldsymbol{\varphi }_{[\alpha ]})(\mathbf{v})\in U_{\beta }^{\min
}(\mathbf{v})\subset \mathbf{V}_{\beta }\subset \mathbf{V}_{\beta _{\Vert
\cdot \Vert _{\beta }}},
\end{equation*}%
when $\#\beta \geq 2.$ However, if $\mathbf{v}\in \mathbf{V}_{D_{\Vert \cdot
\Vert _{D}}}\setminus \mathbf{V}_{D}$ then $\overline{(id_{\alpha }\otimes 
\boldsymbol{\varphi }_{[\alpha ]})}(\mathbf{v})\in U_{\alpha }^{\min }(%
\mathbf{v})\subset \mathbf{V}_{\alpha _{\Vert \cdot \Vert _{\alpha }}}.$
Since $\Vert \cdot \Vert _{\alpha }\gtrsim \Vert \cdot \Vert _{\vee
(S(\alpha ))}$ also by Lemma \ref{extension_Umin} we have $\overline{%
id_{\beta }\otimes \boldsymbol{\varphi }_{[\beta ]}}\in \mathcal{L}(\mathbf{V%
}_{\alpha _{\Vert \cdot \Vert _{\alpha }}},\mathbf{V}_{\beta _{\Vert \cdot
\Vert _{\beta }}}).$ In consequence, a natural extension of the definition
of minimal subspace $U_{\beta }^{\min }(\mathbf{v}),$ for $\mathbf{v}\in 
\mathbf{V}_{D_{\Vert \cdot \Vert _{D}}}\setminus \mathbf{V}_{D},$ is given
by 
\begin{equation*}
U_{\beta }^{\min }(\mathbf{v}):=\mathrm{span}\, \left\{ \overline{%
(id_{\beta}\otimes \boldsymbol{\varphi }_{[\beta ]})}\circ \overline{%
(id_{\alpha }\otimes \boldsymbol{\varphi }_{[\alpha ]})}(\mathbf{v}):%
\boldsymbol{\varphi }_{[\alpha ]}\in \left. _{a}\bigotimes_{\substack{ \mu
\in S(D) \setminus \{\alpha\}}}\mathbf{V}_{\mu }^{\ast }\right. \text{ and } 
\boldsymbol{\varphi }_{[\beta ]}\in \left._{a}\bigotimes_{\substack{ %
\delta\in S(\alpha) \setminus \{\beta\}}} \mathbf{V}_{\delta }^{\ast
}\right. \right\}.
\end{equation*}%
To simplify the notation, we can write 
\begin{equation*}
\overline{(id_{\beta }\otimes \boldsymbol{\varphi }_{[\beta ,\alpha ]})}(%
\mathbf{v}):=\overline{(id_{\beta }\otimes \boldsymbol{\varphi }_{[\beta ]})}%
\circ \overline{(id_{\alpha }\otimes \boldsymbol{\varphi }_{[\alpha ]})}(%
\mathbf{v})
\end{equation*}%
where $\boldsymbol{\varphi }_{[\beta ,\alpha ]}:=\boldsymbol{\varphi }%
_{[\alpha ]}\otimes \boldsymbol{\varphi }_{[\beta ]}\in \left( \left.
_{a}\bigotimes_{\substack{ \mu \in S(D) \setminus \{\alpha\} }}\mathbf{V}%
_{\mu }^{\ast }\right. \right) \otimes _{a}\left( \left. _{a}\bigotimes 
_{\substack{ \delta \in S(\alpha ) \setminus \{\beta\} }}\mathbf{V}_{\delta
}^{\ast }\right. \right) $ and $\overline{(id_{\beta }\otimes \boldsymbol{%
\varphi }_{[\beta ,\alpha ]})}\in \mathcal{L}(\mathbf{V}_{D_{\Vert \cdot
\Vert _{D}}},\mathbf{V}_{\beta _{\Vert \cdot \Vert _{\beta }}}).$ Proceeding
inductively, from the root to the leaves, we define the minimal subspace $%
U_{j}^{\min }(\mathbf{v})$ for each $j\in \mathcal{L}(T_{D})$ such that
there exists $\eta \in T_{D}\setminus \{D\}$ with $j\in S(\eta )$ as 
\begin{equation*}
U_{j}^{\min }(\mathbf{v}):=\mathrm{span}\,\left\{ \overline{(id_{j}\otimes 
\boldsymbol{\varphi }_{[j,\eta ,\ldots ,\beta ,\alpha ]})}(\mathbf{v}):%
\boldsymbol{\varphi }_{[j,\eta,\ldots ,\beta ,\alpha ]}\in \mathbf{W}%
_{j}\right\} ,
\end{equation*}%
where 
\begin{equation*}
\mathbf{W}_{j}:=\left( \left. _{a}\bigotimes_{\substack{ \mu \in S(D)
\setminus \{\alpha\} }} \mathbf{V}_{\mu }^{\ast }\right. \right) \otimes
_{a}\left( \left. _{a}\bigotimes_{\substack{ \delta \in S(\alpha ) \setminus
\{\beta\} }}\mathbf{V}_{\delta }^{\ast }\right. \right) \otimes _{a}\cdots
\otimes _{a}\left( \left. _{a}\bigotimes_{\substack{ k\in S(\eta )\setminus
\{j\} }}V_{k}^{\ast }\right. \right) .
\end{equation*}%
With this extension the following result  can be shown (see Lemma 3.13 in 
\cite{FALHACK}).

\begin{lemma}
Let $\{\mathbf{V}_{\alpha _{\Vert \cdot \Vert _{\alpha }}}\}_{\alpha \in
T_{D}\setminus \{D\}}$ be a representation of the Banach tensor space $%
\mathbf{V}_{D_{\Vert \cdot \Vert _{D}}}=\left. _{\Vert \cdot \Vert
_{D}}\bigotimes_{j\in D}V_{j}\right. ,$ in the topological tree-based format
and assume that \eqref{tree_injective_norm} holds. Let $\{\mathbf{v}%
_{n}\}_{n\geq 0}\subset \mathbf{V}_{D_{\Vert \cdot \Vert _{D}}}$ with $%
\mathbf{v}_{n}\rightharpoonup \mathbf{v},$ and $\mu \in T_{D}\setminus
(\{D\}\cup \mathcal{L}(T_{D}))$. Then for each $\gamma \in S(\mu )$ we have 
\begin{equation*}
\overline{(id_{\gamma }\otimes \boldsymbol{\varphi }_{[\gamma ,\mu ,\cdots
,\beta ,\alpha ]})}(\mathbf{v}_{n})\rightharpoonup \overline{(id_{\gamma
}\otimes \boldsymbol{\varphi }_{[\gamma ,\mu ,\cdots ,\beta ,\alpha ]})}(%
\mathbf{v})\text{ in }\mathbf{V}_{\gamma _{\Vert \cdot \Vert _{\gamma }}},
\end{equation*}%
for all $\boldsymbol{\varphi }_{[\gamma ,\mu ,\cdots ,\beta ,\alpha ]}\in
\left( \left. _{a}\bigotimes_{\substack{ \mu \in S(D)\setminus \{\alpha \}}}%
\mathbf{V}_{\mu }^{\ast }\right. \right) \otimes _{a}\left( \left.
_{a}\bigotimes_{\substack{ \delta \in S(\alpha )\setminus \{\beta \}}}%
\mathbf{V}_{\delta }^{\ast }\right. \right) \otimes _{a}\cdots \otimes
_{a}\left( \left. _{a}\bigotimes_{\substack{ \eta \in S(\mu )\setminus
\{\gamma \}}}V_{\eta }^{\ast }\right. \right) .$
\end{lemma}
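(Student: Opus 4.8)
The plan is to read the conclusion as the weak-to-weak sequential continuity of a single bounded linear operator, $\Phi:=\overline{(id_{\gamma }\otimes \boldsymbol{\varphi }_{[\gamma ,\mu ,\cdots ,\beta ,\alpha ]})}$, and to exploit the fact that, by the iterated definition recorded just above, this operator is a finite composition of elementary contraction maps running along the branch of $T_{D}$ joining the root to $\gamma $. Concretely, write that branch as $D\to \alpha \to \beta \to \cdots \to \mu \to \gamma $ (so $\alpha \in S(D)$, $\beta \in S(\alpha )$, \ldots, $\gamma \in S(\mu )$) and let $\boldsymbol{\varphi }_{[\delta ]}$ be the component of $\boldsymbol{\varphi }_{[\gamma ,\mu ,\cdots ,\beta ,\alpha ]}$ lying in ${}_{a}\bigotimes_{\sigma \in S(p(\delta ))\setminus \{\delta \}}\mathbf{V}_{\sigma }^{\ast }$, where $p(\delta )$ denotes the parent of $\delta $. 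Then
\[
\Phi =\overline{(id_{\gamma }\otimes \boldsymbol{\varphi }_{[\gamma ]})}\circ \overline{(id_{\mu }\otimes \boldsymbol{\varphi }_{[\mu ]})}\circ \cdots \circ \overline{(id_{\beta }\otimes \boldsymbol{\varphi }_{[\beta ]})}\circ \overline{(id_{\alpha }\otimes \boldsymbol{\varphi }_{[\alpha ]})}.
\]

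Next I would verify, factor by factor, that every map in this composition is continuous, via repeated use of Lemma~\ref{extension_Umin}. For the rightmost factor: $S(D)$ is a non-trivial partition of $D$, each of its non-leaf blocks carries a tensor Banach space structure, and $\Vert \cdot \Vert _{D}\gtrsim \Vert \cdot \Vert _{\vee (S(D))}$ by the standing assumption; hence Lemma~\ref{extension_Umin} gives $\overline{(id_{\alpha }\otimes \boldsymbol{\varphi }_{[\alpha ]})}\in \mathcal{L}(\mathbf{V}_{D_{\Vert \cdot \Vert _{D}}},\mathbf{V}_{\alpha _{\Vert \cdot \Vert _{\alpha }}})$. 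For a general factor $\overline{(id_{\zeta }\otimes \boldsymbol{\varphi }_{[\zeta ]})}$ attached to a non-leaf vertex $\eta $ on the branch with son $\zeta \in S(\eta )$ lying on the branch, Theorem~\ref{ext_Banach} together with the discussion preceding the lemma identifies $\mathbf{V}_{\eta _{\Vert \cdot \Vert _{\eta }}}$ with the tensor Banach space ${}_{\Vert \cdot \Vert _{\eta }}\bigotimes_{\sigma \in S(\eta )}\mathbf{V}_{\sigma _{\Vert \cdot \Vert _{\sigma }}}$; since $S(\eta )$ is a non-trivial partition of $\eta $ and $\Vert \cdot \Vert _{\eta }\gtrsim \Vert \cdot \Vert _{\vee (S(\eta ))}$, Lemma~\ref{extension_Umin} applies once more and yields $\overline{(id_{\zeta }\otimes \boldsymbol{\varphi }_{[\zeta ]})}\in \mathcal{L}(\mathbf{V}_{\eta _{\Vert \cdot \Vert _{\eta }}},\mathbf{V}_{\zeta _{\Vert \cdot \Vert _{\zeta }}})$. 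Composing these bounded operators along the entire branch shows $\Phi \in \mathcal{L}(\mathbf{V}_{D_{\Vert \cdot \Vert _{D}}},\mathbf{V}_{\gamma _{\Vert \cdot \Vert _{\gamma }}})$.

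Finally I would invoke the elementary fact that a bounded linear operator $\Phi \in \mathcal{L}(X,Y)$ between normed spaces is weak-to-weak sequentially continuous: for any $\psi \in Y^{\ast }$ one has $\psi \circ \Phi \in X^{\ast }$, so $\mathbf{v}_{n}\rightharpoonup \mathbf{v}$ forces $\psi (\Phi \mathbf{v}_{n})=(\psi \circ \Phi )(\mathbf{v}_{n})\to (\psi \circ \Phi )(\mathbf{v})=\psi (\Phi \mathbf{v})$, and since $\psi $ is arbitrary, $\Phi \mathbf{v}_{n}\rightharpoonup \Phi \mathbf{v}$ in $\mathbf{V}_{\gamma _{\Vert \cdot \Vert _{\gamma }}}$, which is exactly the assertion. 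The one genuinely delicate point — the natural candidate for the hard part — is the bookkeeping of the second step: one must be certain that at \emph{every} vertex $\eta $ crossed along the branch the hypotheses of Lemma~\ref{extension_Umin} are really in force, i.e. that $\mathbf{V}_{\eta _{\Vert \cdot \Vert _{\eta }}}$ has already been identified with ${}_{\Vert \cdot \Vert _{\eta }}\bigotimes_{\sigma \in S(\eta )}\mathbf{V}_{\sigma _{\Vert \cdot \Vert _{\sigma }}}$ (this is where Theorem~\ref{ext_Banach}, and the $T_{D}$-continuity provided by $\Vert \cdot \Vert _{\eta }\gtrsim \Vert \cdot \Vert _{\vee (S(\eta ))}$ through Proposition~\ref{injective bounded below}(c), are used), and that the injective-norm lower bound transfers correctly from a parent to its son down the tree, which is precisely the identity \eqref{tree_injective_norm} coming from Proposition~\ref{injective bounded below}(a). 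Everything else is just the composition of continuous maps followed by the weak-to-weak continuity of bounded operators.
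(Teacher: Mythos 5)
Your argument is correct and is exactly the intended one: the paper gives no proof of this lemma, deferring to Lemma~3.13 of \cite{FALHACK}, whose reasoning is precisely your chain --- the operator is by definition a composition of factors along the branch from the root to $\gamma$, each factor is bounded by Lemma~\ref{extension_Umin} thanks to the standing bounds $\Vert\cdot\Vert_{\eta}\gtrsim\Vert\cdot\Vert_{\vee(S(\eta))}$ and the identifications supplied by Theorem~\ref{ext_Banach}, and bounded linear operators between normed spaces are weak-to-weak sequentially continuous. Nothing essential is missing.
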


Then in a similar way as Theorem 3.15 in \cite{FALHACK} the following
theorem can be shown.

\begin{theorem}
Let $\{\mathbf{V}_{\alpha _{\Vert \cdot \Vert _{\alpha }}}\}_{\alpha \in
T_{D}\setminus \{D\}}$ be a representation of the Banach tensor space $%
\mathbf{V}_{D_{\Vert \cdot \Vert _{D}}}=\left. _{\Vert \cdot \Vert
_{D}}\bigotimes_{j\in D}V_{j}\right. ,$ in the topological tree-based format
and assume that \eqref{tree_injective_norm} holds. Let $\{\mathbf{v}%
_{n}\}_{n\geq 0}\subset \mathbf{V}_{D_{\Vert \cdot \Vert _{D}}}$ with $%
\mathbf{v}_{n}\rightharpoonup \mathbf{v},$ then 
\begin{equation*}
\dim \overline{U_{\alpha }^{\min }(\mathbf{v})}^{\Vert \cdot \Vert _{\alpha
}}=\dim U_{\alpha }^{\min }(\mathbf{v})\leq \liminf_{n\rightarrow \infty
}\dim U_{\alpha }^{\min }(\mathbf{v}_{n}),
\end{equation*}%
for all $\alpha \in T_{D}\setminus \{D\}.$
\end{theorem}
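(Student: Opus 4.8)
The plan is to reduce the statement to the analogous result already available for a single split of $D$ and then propagate it down the tree. The first equality $\dim \overline{U_{\alpha }^{\min }(\mathbf{v})}^{\Vert \cdot \Vert _{\alpha }}=\dim U_{\alpha }^{\min }(\mathbf{v})$ is immediate once we know that $U_{\alpha}^{\min}(\mathbf{v})$ is finite-dimensional (a finite-dimensional subspace of a normed space is already closed), so the content is the lower-semicontinuity inequality $\dim U_{\alpha }^{\min }(\mathbf{v})\leq \liminf_{n\rightarrow \infty }\dim U_{\alpha }^{\min }(\mathbf{v}_{n})$. For $\alpha\in S(D)$ this follows directly from Theorem~\ref{classical_best_approx}-type reasoning in \cite{FALHACK}: using the extended definition $U_{\alpha }^{\min }(\mathbf{v})=\{ \overline{(id_{\alpha }\otimes \boldsymbol{\varphi }_{[\alpha ]})}(\mathbf{v}):\boldsymbol{\varphi }_{[\alpha ]}\in {}_{a}\bigotimes_{\beta \in S(D)\setminus\{\alpha\}}\mathbf{V}_{\beta }^{\ast }\}$, together with the weak continuity of each $\overline{(id_{\alpha }\otimes \boldsymbol{\varphi }_{[\alpha ]})}$ (which is where the hypothesis $\Vert\cdot\Vert_D\gtrsim\Vert\cdot\Vert_{\vee(S(D))}$ and Lemma~\ref{extension_Umin} enter).

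Next I would make the semicontinuity argument precise. Suppose $r:=\dim U_{\alpha }^{\min }(\mathbf{v})$; pick $\boldsymbol{\varphi }_{[\alpha ]}^{(1)},\ldots,\boldsymbol{\varphi }_{[\alpha ]}^{(r)}$ such that the vectors $w_i:=\overline{(id_{\alpha }\otimes \boldsymbol{\varphi }_{[\alpha ]}^{(i)})}(\mathbf{v})$ are linearly independent in $\mathbf{V}_{\alpha _{\Vert \cdot \Vert _{\alpha }}}$. Since $\mathbf{v}_n\rightharpoonup\mathbf{v}$ and each $\overline{(id_{\alpha }\otimes \boldsymbol{\varphi }_{[\alpha ]}^{(i)})}$ is bounded linear, it is weak-to-weak continuous, so $w_i^{(n)}:=\overline{(id_{\alpha }\otimes \boldsymbol{\varphi }_{[\alpha ]}^{(i)})}(\mathbf{v}_n)\rightharpoonup w_i$ in $\mathbf{V}_{\alpha _{\Vert \cdot \Vert _{\alpha }}}$. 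Choosing functionals $g_1,\ldots,g_r\in \mathbf{V}_{\alpha _{\Vert \cdot \Vert _{\alpha }}}^{\ast}$ biorthogonal to the $w_i$, the Gram-type determinant $\det(g_k(w_i^{(n)}))_{i,k}$ converges to $\det(g_k(w_i))_{i,k}=1\neq 0$, so for $n$ large the $w_i^{(n)}$ are linearly independent; since each $w_i^{(n)}\in U_{\alpha }^{\min }(\mathbf{v}_n)$, this gives $\dim U_{\alpha }^{\min }(\mathbf{v}_n)\ge r$ eventually, hence the $\liminf$ bound. This handles every $\alpha\in S(D)$.

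For a general $\alpha\in T_D\setminus\{D\}$ I would iterate the same idea along the path from the root to $\alpha$, using the compositions $\overline{(id_{\alpha }\otimes \boldsymbol{\varphi }_{[\alpha,\ldots,\beta,\gamma]})}$ introduced just before the statement and the Lemma asserting their weak continuity along weakly convergent sequences (the Lemma~3.13-analogue in the excerpt). Concretely, one spans $U_{\alpha}^{\min}(\mathbf{v})$ by elements of the form $\overline{(id_{\alpha }\otimes \boldsymbol{\varphi }_{[\alpha,\ldots]})}(\mathbf{v})$ with $\boldsymbol{\varphi}$ ranging over the appropriate algebraic tensor product $\mathbf{W}_\alpha$ of duals along the path, and the cited Lemma gives $\overline{(id_{\alpha }\otimes \boldsymbol{\varphi }_{[\alpha,\ldots]})}(\mathbf{v}_n)\rightharpoonup \overline{(id_{\alpha }\otimes \boldsymbol{\varphi }_{[\alpha,\ldots]})}(\mathbf{v})$; then the biorthogonal-determinant argument above applies verbatim in $\mathbf{V}_{\alpha_{\Vert\cdot\Vert_\alpha}}$.

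I expect the main obstacle to be bookkeeping rather than a genuine difficulty: one must make sure the extended minimal subspace $U_{\alpha}^{\min}(\mathbf{v})$ for $\mathbf{v}\in\mathbf{V}_{D_{\Vert\cdot\Vert_D}}\setminus\mathbf{V}_D$ is actually spanned by the path-indexed maps $\overline{(id_{\alpha }\otimes \boldsymbol{\varphi }_{[\alpha,\ldots]})}(\mathbf{v})$ (so that finite-dimensionality and the independence argument go through), and that the weak convergence hypothesis on $\mathbf{v}_n$ propagates correctly at every vertex — this is exactly what the preceding Lemma is set up to deliver, so invoking it at each level of the path should suffice. The hypothesis \eqref{tree_injective_norm}, i.e. $\Vert\cdot\Vert_\alpha\gtrsim\Vert\cdot\Vert_{\vee(S(\alpha))}$ for every internal $\alpha$, is used precisely to guarantee that all the partial contraction maps are continuous (Proposition~\ref{injective bounded below}(c)) and extend to the completions (Lemma~\ref{extension_Umin}), so no stronger structural assumption on the norms is needed.
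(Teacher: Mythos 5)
Your proposal is correct and follows essentially the route the paper intends: the paper gives no written proof but defers to Theorem~3.15 of \cite{FALHACK}, whose argument is exactly the combination you describe of the extended (path-indexed) definition of $U_{\alpha}^{\min}(\mathbf{v})$, the weak-to-weak continuity of the contraction maps supplied by the preceding Lemma under \eqref{tree_injective_norm}, and a lower-semicontinuity-of-rank argument (your biorthogonal-determinant version is an equivalent variant of the subsequence/normalised-coefficients argument used there). The only point worth adding is that $U_{\alpha}^{\min}(\mathbf{v})$ need not be finite-dimensional for $\mathbf{v}$ in the completion, but then $\dim\overline{U_{\alpha}^{\min}(\mathbf{v})}^{\Vert\cdot\Vert_{\alpha}}=\dim U_{\alpha}^{\min}(\mathbf{v})=\infty$ trivially and your independence argument applied to arbitrary finite families still yields the $\liminf$ bound.
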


Now, following the proof of Theorem 4.1 in \cite{FALHACK} we obtain the
final theorem.

\begin{theorem}
\label{best_approximation_FT} Let $\mathbf{V}_{D}=\left.
_{a}\bigotimes_{j\in D}V_{j}\right. $ and let $\{\mathbf{V}_{\alpha _{\Vert
\cdot \Vert _{\alpha }}}\}_{\alpha \in T_{D}\setminus \{D\}}$ be a
representation of a reflexive Banach tensor space $\mathbf{V}_{D_{\Vert
\cdot \Vert _{D}}}=\left. _{\Vert \cdot \Vert _{D}}\bigotimes_{j\in
D}V_{j}\right. $ in the topological tree-based format and assume that \eqref%
{tree_injective_norm} holds. Then the set $\mathcal{FT}_{\leq \mathfrak{r}}(%
\mathbf{V}_{D})$ is weakly closed in $\mathbf{V}_{D_{\Vert \cdot \Vert _{D}}}
$ and hence for each $\mathbf{v}\in \mathbf{V}_{D_{\Vert \cdot \Vert _{D}}}$
there exists $\mathbf{u}_{best}\in \mathcal{FT}_{\leq \mathfrak{r}}(\mathbf{V%
}_{D})$ such that 
\begin{equation*}
\Vert \mathbf{v}-\mathbf{u}_{best}\Vert _{D}=\min_{\mathbf{u}\in \mathcal{FT}%
_{\leq \mathfrak{r}}(\mathbf{V}_{D})}\Vert \mathbf{v}-\mathbf{u}\Vert _{D}.
\end{equation*}
\end{theorem}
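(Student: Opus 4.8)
The plan is to reduce the statement to the two classical ingredients for existence of best approximations in reflexive Banach spaces: (i) the set $\mathcal{FT}_{\leq \mathfrak{r}}(\mathbf{V}_D)$ is bounded-closed in the weak topology, and (ii) bounded sets in a reflexive space are weakly sequentially precompact. Since (ii) is standard (Banach--Alaoglu plus reflexivity, and the fact that we only need \emph{sequential} weak compactness here, which holds because the relevant tensor is approximated from a separable subspace), the entire burden is on proving that $\mathcal{FT}_{\leq \mathfrak{r}}(\mathbf{V}_D)$ is weakly closed. First I would fix $\mathbf{v}\in\mathbf{V}_{D_{\Vert\cdot\Vert_D}}$, let $m=\inf_{\mathbf{u}\in\mathcal{FT}_{\leq\mathfrak{r}}(\mathbf{V}_D)}\Vert\mathbf{v}-\mathbf{u}\Vert_D$, and take a minimising sequence $\{\mathbf{u}_n\}\subset\mathcal{FT}_{\leq\mathfrak{r}}(\mathbf{V}_D)$. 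This sequence is bounded (eventually $\Vert\mathbf{u}_n\Vert_D\le \Vert\mathbf{v}\Vert_D+m+1$), so by reflexivity a subsequence converges weakly, $\mathbf{u}_n\rightharpoonup \mathbf{u}_{best}$, and by weak lower semicontinuity of the norm $\Vert\mathbf{v}-\mathbf{u}_{best}\Vert_D\le\liminf\Vert\mathbf{v}-\mathbf{u}_n\Vert_D=m$. It then remains only to show $\mathbf{u}_{best}\in\mathcal{FT}_{\leq\mathfrak{r}}(\mathbf{V}_D)$, i.e. that weak limits preserve the bound on each $\dim U_\alpha^{\min}$.

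This last point is exactly where the preceding machinery of the section is used, and it is the main obstacle. The key fact is the theorem just before the statement: under hypothesis \eqref{tree_injective_norm}, if $\mathbf{v}_n\rightharpoonup\mathbf{v}$ then $\dim U_\alpha^{\min}(\mathbf{v})\le\liminf_{n\to\infty}\dim U_\alpha^{\min}(\mathbf{v}_n)$ for all $\alpha\in T_D\setminus\{D\}$. Applying this to the minimising subsequence gives $\dim U_\alpha^{\min}(\mathbf{u}_{best})\le\liminf_n\dim U_\alpha^{\min}(\mathbf{u}_n)\le r_\alpha$ for every $\alpha\in T_D\setminus\{D\}$; for the root one has $\dim U_D^{\min}(\mathbf{u}_{best})=\dim\mathrm{span}\{\mathbf{u}_{best}\}\le 1\le r_D$ trivially. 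Hence $\mathbf{u}_{best}\in\mathcal{FT}_{\leq\mathfrak{r}}(\mathbf{V}_D)$ and $\Vert\mathbf{v}-\mathbf{u}_{best}\Vert_D=m$, so the infimum is attained. In particular $\mathcal{FT}_{\leq\mathfrak{r}}(\mathbf{V}_D)$ is weakly (sequentially) closed, since for any weakly convergent sequence inside it the limit satisfies the same dimension bounds.

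The delicate step — which I would flag as the real heart of the argument and which is imported from \cite{FALHACK} via the chain of lemmas in this section — is the semicontinuity of $\dim U_\alpha^{\min}$ under weak convergence. Its proof relies on: the extension of the minimal-subspace definition to $\mathbf{V}_{D_{\Vert\cdot\Vert_D}}\setminus\mathbf{V}_D$ through the maps $\overline{(id_\gamma\otimes\boldsymbol{\varphi}_{[\gamma,\mu,\dots,\beta,\alpha]})}$, which by Lemma~\ref{extension_Umin} and the standing assumption $\Vert\cdot\Vert_\alpha\gtrsim\Vert\cdot\Vert_{\vee(S(\alpha))}$ are continuous hence weak-to-weak continuous; the resulting weak convergence $\overline{(id_\gamma\otimes\boldsymbol{\varphi}_{[\gamma,\mu,\dots]})}(\mathbf{v}_n)\rightharpoonup\overline{(id_\gamma\otimes\boldsymbol{\varphi}_{[\gamma,\mu,\dots]})}(\mathbf{v})$ from the penultimate lemma; and the elementary fact that if finitely many vectors $w_n^{(1)},\dots,w_n^{(k)}$ converge weakly to $w^{(1)},\dots,w^{(k)}$ and are linearly dependent for all $n$, then the limits are linearly dependent — which upgrades to the $\liminf$ bound on dimensions. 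The only thing to check carefully when adapting the \cite{FALHACK} proof to the present, weaker hypotheses is that \eqref{tree_injective_norm} alone (rather than the stronger nested-injective-norm structure of Theorem~\ref{classical_best_approx}) suffices at each vertex to guarantee the requisite continuity of the contraction maps; this is precisely what Theorem~\ref{ext_Banach} together with Proposition~\ref{injective bounded below}(a),(c) provides, so no further assumption on the norms is needed.
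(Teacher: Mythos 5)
Your proposal is correct and follows essentially the same route as the paper, which simply invokes the argument of Theorem 4.1 in \cite{FALHACK}: weak (sequential) closedness of $\mathcal{FT}_{\leq \mathfrak{r}}(\mathbf{V}_{D})$ follows from the preceding theorem on the lower semicontinuity of $\dim U_{\alpha }^{\min }$ under weak convergence, and existence of the minimiser then follows from the standard minimising-sequence argument in a reflexive Banach space. Your identification of the semicontinuity result as the real content, and of Theorem~\ref{ext_Banach} together with Proposition~\ref{injective bounded below} as what makes the weaker norm hypotheses sufficient, matches the paper's intent exactly.
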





\end{document}